

\documentclass[11pt]{article}
\usepackage[margin=1in]{geometry}


\usepackage{graphicx} 
\usepackage{amsmath} 
\usepackage{amssymb}  
\usepackage{amsthm}

\usepackage{color}
\allowdisplaybreaks[1]

\usepackage{algorithm}
\usepackage{algpseudocode}
\usepackage{hyperref} 

\theoremstyle{plain}  
\newtheorem{theorem}{Theorem}[section]
\newtheorem{lem}[theorem]{Lemma}
\newtheorem{prop}[theorem]{Proposition}
\newtheorem{cor}[theorem]{Corollary}
\newtheorem{defn}[theorem]{Definition}

\theoremstyle{definition}

\theoremstyle{remark} 

\newtheorem{remark}{Remark}[section]



\def\0{{\bf 0}}
\def\1{{\bf 1}}

\def \M{ \mathcal{M}}
\def \bmat{\left[\begin{matrix}}
\def \emat{\end{matrix}\right]}
\def \xyvec{\left[\begin{matrix}x\\y\end{matrix}\right]}
\def \xy1vec{\left[\begin{matrix}x\\y\\1\end{matrix}\right]}
\def \QED{\begin{flushright}\Halmos\end{flushright}\end{proof}}
\def \defeq{\mathrel{\mathop{:}}=}
\def \Tr{\mathrm{Tr}}

%
%
%



\long\def\old#1{}

\definecolor{DarkerGreen}{RGB}{0,170,0}
\long\def\jeff#1{{\color{black}#1}}
\definecolor{orange}{rgb}{1,0.5,0}

\long\def\jeffb#1{{\color{black}#1}}
\long\def\jeffr#1{{\color{black}#1}}
\long\def\jeffo#1{{\color{black}#1}}
\long\def\jeffg#1{{\color{black}#1}}

\title{\LARGE \bf Semidefinite Programming\\and Nash Equilibria in Bimatrix Games}
 \author{Amir Ali Ahmadi and Jeffrey Zhang \thanks{The authors are partially supported by the DARPA Young Faculty Award, the Young Investigator Award of the AFOSR, the CAREER Award of the NSF, the Google Faculty Award, and the Sloan Fellowship.}}
\begin{document}
\date{}
\maketitle


\begin{abstract}
\noindent We explore the power of semidefinite programming (SDP) for finding additive $\epsilon$-approximate Nash equilibria in bimatrix games. We introduce an SDP relaxation for a quadratic programming formulation of the Nash equilibrium (NE) problem and provide a number of valid inequalities to improve the quality of the relaxation. If a rank-1 solution to this SDP is found, then an exact NE can be recovered. We show that for a \jeff{strictly competitive} game, our SDP is guaranteed to return a rank-1 solution. We propose two algorithms based on iterative linearization of smooth nonconvex objective functions \jeffg{whose global minima by design coincide with rank-1 solutions}. Empirically, we {\aaa demonstrate} that these algorithms often recover solutions of rank at most two and $\epsilon$ close to zero. Furthermore, we prove that if a rank-2 solution to our SDP is found, then a $\frac{5}{11}$-NE can be recovered for any game, or a $\frac{1}{3}$-NE for a symmetric game. We then show how our SDP approach can address two (NP-hard) problems of economic interest: finding the maximum welfare achievable under any NE, and testing whether there exists a NE where a particular set of strategies is not played. \jeff{Finally, we show the connection between our SDP and the first level of the Lasserre/sum of squares hierarchy.} 
\end{abstract}

\jeffg{{\bf Keywords}: Nash equilibria, semidefinite programming, correlated equilibria.}

\section{Introduction}\label{intro}

A bimatrix game is a game between two players (referred to in this paper as players A and B) defined by a pair of $m \times n$ payoff matrices $A$ and $B$. Let $\triangle_m$ and $\triangle_n$ denote the $m$-dimensional and $n$-dimensional simplices $$\triangle_m = \{x \in \mathbb{R}^m |\  x_i \ge 0, \forall i, \sum_{i = 1}^m x_i = 1\}, \triangle_n = \{y \in \mathbb{R}^n |\  y_i \ge 0, \forall i, \sum_{i = 1}^n y_i = 1\}.$$ These form the strategy spaces of player A and player B respectively.
For a strategy pair $(x,y)\in \triangle_m \times \triangle_n $, the payoff received by player A (resp. player B) is $x^TAy$ (resp. $x^TBy$). In particular, if the players pick vertices $i$ and $j$ of their respective simplices (also called pure strategies), their payoffs will be $A_{i,j}$ and $B_{i,j}$. One of the prevailing solution concepts for bimatrix games is the notion of \emph{Nash equilibrium}. At such an equilibrium, the players are playing mutual best responses, i.e., a payoff maximizing strategy against the opposing player's strategy. In our notation, a Nash equilibrium for the game $(A,B)$ is a pair of strategies $(x^*, y^*)\in \triangle_m \times \triangle_n $ such that $$x^{*T}Ay^* \ge x^TAy^*, \forall x\in \triangle_m,$$ and $$x^{*T}By^* \ge x^{*T}By, \forall y\in \triangle_n.\footnote{In this paper we assume that all entries of $A$ and $B$ are between 0 and 1, and argue at the beginning of Section~\ref{Intro to SDP} why this is without loss of generality for the purpose of computing Nash equilibria.}$$ 

Nash~\cite{nash1951} proved that for any bimatrix game, such pairs of strategies exist (in fact his result more generally applies to games with a finite number of players and a finite number of pure strategies). While existence of these equilibria is guaranteed, finding them is believed to be a computationally intractable problem. More precisely, a result of Daskalakis, Goldberg, and Papadimitriou~\cite{daskalakis2009complexity} implies that computing Nash equilibria is PPAD-complete (see~\cite{daskalakis2009complexity} for a definition) even when the number of players is 3. This result was later improved by Chen and Deng~\cite{chen2006settling} who showed the same hardness result for bimatrix games.

These results motivate the notion of an approximate Nash equilibrium, a solution concept in which players receive payoffs ``close'' to their best response payoffs. More precisely, a pair of strategies $(x^*, y^*)\in \triangle_m \times \triangle_n$ is an \emph{(additive) $\epsilon$-Nash equilibrium} for the game $(A,B)$ if $$x^{*T}Ay^* \ge x^TAy^* - \epsilon, \forall x\in \triangle_m,$$ and $$x^{*T}By^* \ge x^{*T}By- \epsilon, \forall y\in \triangle_n. \footnote{\jeffb{There are also other important notions of approximate Nash equilibria, such as $\epsilon$-well-supported Nash equilibria~\cite{fearnley2016approximate} and relative approximate Nash equilibria~\cite{daskalakis2013complexity} which are not considered in this paper.}}$$ 
{\aaa Note that when $\epsilon=0$, $(x^*,y^*)$ form an exact Nash equilibrium, and hence it is of interest to find $\epsilon$-Nash equilibria with $\epsilon$ small. Unfortunately, approximation of Nash equilibria has also proved to be computationally difficult.} Cheng, Deng, and Teng have shown in~\cite{chen2006computing} that, unless PPAD $\subseteq$ P, there cannot be a fully polynomial-time approximation scheme for computing Nash equilibria in bimatrix games. There have, however, been a series of constant factor approximation algorithms for this problem~\cite{daskalakis2007progress, daskalakis2006note, kontogiannis2006polynomial, tsaknakis2007optimization}, with the current best producing a .3393 approximation via an algorithm by Tsaknakis and Spirakis~\cite{tsaknakis2007optimization}.

We remark that there are exponential-time algorithms for computing Nash equilibria, such as the Lemke-Howson algorithm~\cite{lemke1964equilibrium, savani2006hard}. There are also certain subclasses of the problem which can be solved in polynomial time, the most notable example being the case of zero-sum games (i.e. when $B=-A$). This problem was shown to be solvable via linear programming by Dantzig~\cite{dantzig1951proof}, and later shown to be polynomially equivalent to linear programming by Adler~\cite{adler2013equivalence}. Aside from computation of Nash equilibria, there are a number of related decision questions which are of economic interest but unfortunately NP-hard. Examples include deciding whether a player's payoff exceeds a certain threshold in some Nash equilibrium, deciding whether a game has a unique Nash equilibrium, or testing whether there exists a Nash equilibrium where a particular set of strategies is not played~\cite{gilboa1989nash, conitzer2002complexity}.


Our focus in this paper is on understanding the power of semidefinite programming\footnote{The unfamiliar reader is referred to~\cite{vandenberghe1996semidefinite} for the theory of SDPs and a description of polynomial-time algorithms for them based on interior point methods.} (SDP) for finding approximate Nash equilibria in bimatrix games or providing certificates for related decision questions. {\aaa The goal is not to develop a competitive solver, but rather to analyze the algorithmic power of SDP when applied to basic problems around computation of Nash equilibria.}
\jeffg{Semidefinite programming relaxations have been analyzed in depth in areas such as combinatorial optimization~\cite{goemans1995improved},~\cite{lovasz1979shannon} and systems theory~\cite{boyd1994linear}, but not to such an extent in game theory. To our knowledge, the appearance of SDP in the game theory literature includes the work of Stein for exchangeable equilibria in symmetric games~\cite{stein1943exchangeable}, of Parrilo on zero-sum polynomial games~\cite{parrilo2006polynomial}, of Parrilo and Shah for zero-sum stochastic games~\cite{shah2007polynomial}, and of Laraki and Lasserre for semialgebraic min-max problems in static and dynamic games~\cite{laraki2012semidefinite}.}



\subsection{Organization and Contributions of the Paper}
\jeff{In Section~\ref{Intro to SDP}, we formulate the problem of finding a Nash equilibrium in a bimatrix game as a nonconvex quadratically constrained quadratic program and pose a natural SDP relaxation for it.} In Section~\ref{Exactness Zero Sum}, we show that our SDP is exact when the game is \jeff{strictly competitive} (see Definition~\ref{Strictly Competitive Def}).
\jeffg{In Section~\ref{Algorithms}, we design two continuous but nonconvex objective functions for our SDP whose global minima coincide with rank-1 solutions. We provide a heuristic based on iterative linearization for minimizing both objective functions. We show empirically that {\aaa these approaches produce} $\epsilon$ very close to zero (on average in the order of $10^{-3}$).}
In Section~\ref{Bounds}, we {\aaa establish} a number of bounds on the quality of the approximate Nash equilibria that {\aaa can be read off of feasible solutions to our SDP}. \jeffg{In Theorems \ref{nksl}, \ref{Thm: Trace minus 2xxt}, and \ref{Thm: 1-1/k}, we show that when the SDP returns solutions which are ``close'' to rank-1, the resulting strategies have have small $\epsilon$.} We then present an improved analysis in the rank-2 case which shows how one can recover a $\frac{5}{11}$-Nash equilibrium from the SDP solution (Theorem~\ref{5/11e}). We further prove that for symmetric games (i.e., when $B = A^T$), a $\frac{1}{3}$-Nash equilibrium can be recovered in the rank-2 case (Theorem~\ref{1/3se}). We do not currently know of a polynomial-time algorithm for finding rank-2 solutions to our SDP. If such an algorithm were found, it would, together with our analysis, improve the best known approximation bound for symmetric games. 	
In Section~\ref{Bounding Payoffs and Strategy Exclusion}, we show how our SDP formulation can be used to provide certificates for certain (NP-hard) questions of economic interest about Nash equilibria \jeffg{in symmetric games}. These are the problems of testing whether the maximum welfare achievable under any \jeffg{symmetric} Nash equilibrium exceeds some threshold, and whether a set of strategies is played in every \jeffg{symmetric} Nash equilibrium. In Section~\ref{Sec: Connection to Sum of Squares/Lasserre Hierarchy}, we show that the SDP analyzed in this paper dominates the first level of the Lasserre hierarchy~(Proposition~\ref{strongerLasserre}). Some directions for future research are discussed in Section~\ref{Conclusion}. \jeffg{The four appendices of the paper add some numerical and technical details.}

\section{The Formulation of our SDP Relaxation} \label{Intro to SDP}
In this section we present an SDP relaxation for the problem of finding Nash equilibria in bimatrix games. This is done after a straightforward reformulation of the problem as a nonconvex quadratically constrained quadratic program. Throughout the paper the following notation is used. 
\begin{itemize}
	\setlength{\itemsep}{1pt}
	\setlength{\parskip}{0pt}
	\setlength{\parsep}{0pt}
	\renewcommand\labelitemi{$\cdot$}
	\item $A_{i,}$ refers to the $i$-th row of a matrix $A$.
	\item $A_{,j}$ refers to the $j$-th column of a matrix $A$.
	\item $e_i$ refers to the elementary vector $(0,\ldots, 0, 1, 0, \ldots, 0)^T$ with the 1 being in position $i$.
	\item $\triangle_k$ refers to the $k$-dimensional simplex.
	\item $1_m$ refers to the $m$-dimensional vector of one's.
	\item $0_m$ refers to the $m$-dimensional vector of zero's.
	\item $J_{m,n}$ refers to the $m \times n$ matrix of one's.
	\item $A \succeq 0$ denotes that the matrix $A$ is positive semidefinite (psd), i.e., has nonnegative eigenvalues.
	\item $A \ge 0$ denotes that the matrix $A$ is nonnegative, i.e., has nonnegative entries.
	\item $A \succeq B$ denotes that $A - B \succeq 0$.
	\item $\mathbb{S}^{k \times k}$ denotes the set of symmetric $k \times k$ matrices.
	\item $\Tr(A)$ denotes the trace of a matrix $A$, i.e., the sum of its diagonal elements.
	\item $A \otimes B$ denotes the Kronecker product of matrices $A$ and $B$.
	\item $vec(M)$ denotes the vectorized version of a matrix $M$.
	\jeffr{\item For a vector $v$, $diag(v)$ denotes the diagonal matrix with $v$ on its diagonal. For a square matrix $M$, $diag(M)$ denotes the vector containing its diagonal entries.}
\end{itemize}

We also assume that all entries of the payoff matrices $A$ and $B$ are between 0 and 1. This can be done without loss of generality because Nash equilibria are invariant under certain affine transformations in the payoffs. In particular, the games $(A,B)$ and ${(cA+dJ_{m\times n}, eB+fJ_{m \times n})}$ have the same Nash equilibria for any scalars $c,d,e,$ and $f$, with $c$ and $e$ positive. This is because
\begin{equation*}
\begin{aligned}
x^{*T}Ay &\ge x^TAy\\
\Leftrightarrow c(x^{*T}Ay^*) + d &\ge c(x^TAy^*)+d\\
\Leftrightarrow c(x^{*T}Ay^*) + d(x^{*T}J_{m\times n}y^*) &\ge c(x^TAy^*)+d(x^TJ_{m\times n}y^*)\\
\Leftrightarrow x^{*T}(cA+dJ_{m\times n})y^* &\ge x^T(cA+dJ_{m\times n})y
\end{aligned}
\end{equation*}
Identical reasoning applies for player B. 

\subsection{Nash Equilibria as Solutions to Quadratic Programs}\label{Nash as QP}
Recall the definition of a Nash equilibrium from Section~\ref{intro}. \jeff{An equivalent characterizaiton is that a strategy pair $(x^*, y^*)\in \triangle_m \times \triangle_n$ is a Nash equilibrium for the game $(A,B)$ if and only if
\begin{equation}\label{QP}
\begin{aligned}
x^{*T}Ay^* \ge e_i^TAy^*, \forall i \in \{1,\ldots, m\},\\
x^{*T}By^* \ge x^{*T}Be_i, \forall i \in \{1,\ldots, n\}.
\end{aligned}
\end{equation} 

The equivalence can be seen by noting that because the payoff from playing any mixed strategy is a convex combination of payoffs from playing pure strategies, there is always a pure strategy best response to the other player's strategy.}


We now treat the Nash problem as the following quadratic programming (QP) feasibility problem:\\
\begin{equation}\label{QCQP Formulation}
\begin{aligned}
& \underset{x \in \mathbb{R}^m, y \in \mathbb{R}^n}{\min}
& & 0 \\
& \text{subject to}
& & x^TAy \ge e_i^TAy, \forall i\in \{1,\ldots, m\},\\
&&& x^TBy \ge x^TBe_j, \forall j\in \{1,\ldots, n\},\\
&&& x_i \ge 0, \forall i\in \{1,\ldots, m\},\\
&&& y_i \ge 0, \forall j\in \{1,\ldots, n\},\\
&&& \sum_{i=1}^m x_i = 1,\\
&&& \sum_{i=1}^n y_i = 1.\\
\end{aligned}
\end{equation}

\jeff{Similarly, a pair of strategies} $x^* \in \triangle_m$ and $y^* \in \triangle_n$ form an $\epsilon$-Nash equilibrium for the game $(A,B)$ if and only if
$$x^{*T}Ay^* \ge e_i^TAy^*-\epsilon, \forall i \in \{1,\ldots, m\},$$
$$x^{*T}By^* \ge x^{*T}Be_i-\epsilon, \forall i \in \{1,\ldots, n\}.$$
Observe that any pair of simplex vectors $(x, y)$ is an $\epsilon$-Nash equilibrium for the game $(A,B)$ for any $\epsilon$ that satisfies
$$\epsilon \ge \max\{\underset{i}{\max}\ e_i^TAy - x^TAy, \underset{i}{\max}\ x^TBe_i - x^TBy\}.$$


We use the following notation throughout the paper:
\begin{itemize}
	\setlength{\itemsep}{1pt}
	\setlength{\parskip}{0pt}
	\setlength{\parsep}{0pt}
	\renewcommand\labelitemi{$\cdot$}
	\item $\epsilon_A(x,y) \mathrel{\mathop:}= \underset{i}{\max}\ e_i^TAy - x^TAy$,
	\item $\epsilon_B(x,y) \mathrel{\mathop:}= \underset{i}{\max}\ x^TBe_i - x^TBy$,
	\item $\epsilon(x,y) \mathrel{\mathop:}= \max\{\epsilon_A(x,y), \epsilon_B(x,y)\}$,
\end{itemize}
and the function parameters are later omitted if they are clear from the context.

\subsection{SDP Relaxation}\label{SDP Relaxation}

The QP formulation in (\ref{QCQP Formulation}) lends itself to a natural SDP relaxation. We define a matrix
$$\mathcal{M} \mathrel{\mathop:}= \left[\begin{matrix} X & P\\ Z & Y\end{matrix}\right],$$
and an augmented matrix
$$\mathcal{M}' \mathrel{\mathop:}= \left[\begin{matrix} X & P & x\\ Z & Y & y\\ x & y & 1\end{matrix}\right],$$
with $X \in S^{m\times m}, Z \in \mathbb{R}^{n\times m}, Y \in S^{n\times n}, x \in \mathbb{R}^m, y \in \mathbb{R}^n$ and $P = Z^T$.\\\\
The SDP relaxation can then be expressed as

\begin{align}\label{SDP1}\tag{SDP1}
& \underset{\mathcal{M}' \in \mathbb{S}^{m+n+1, m+n+1}}{\min}
& & 0 \nonumber\\
& \text{subject to}
& & \Tr(AZ) \ge e_i^TAy, \forall i\in \{1,\ldots, m\}, \label{SDP1 Relaxed Nash A}\\
&&& \Tr(BZ) \ge x^TBe_j, \forall j\in \{1,\ldots, n\},\label{SDP1 Relaxed Nash B}\\
&&& \sum_{i=1}^m x_i = 1, \label{SDP1 Unity x}\\
&&& \sum_{i=1}^n y_i = 1, \label{SDP1 Unity y}\\
&&& \jeffg{\M' \ge 0}, \label{SDP1 Nonnegativity}\\
&&& \M'_{m+n+1,m+n+1}=1,\\
&&& \mathcal{M}' \label{SDP1 PSD}\succeq 0.
\end{align}

We refer to the constraints (\ref{SDP1 Relaxed Nash A}) and (\ref{SDP1 Relaxed Nash B}) as the relaxed Nash constraints and the constraints (\ref{SDP1 Unity x}) and (\ref{SDP1 Unity y}) as the unity constraints. This SDP is motivated by the following observation.
\begin{prop} Let $\M'$ be any rank-1 feasible solution to~\ref{SDP1}. Then the vectors $x$ and $y$ from its last column constitute a Nash equilibrium for the game $(A,B)$.\end{prop}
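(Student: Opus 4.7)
The plan is to exploit the rank-$1$ and PSD structure of $\mathcal{M}'$ to recognize it as a rank-$1$ outer product, and then verify that the SDP constraints become exactly the Nash equilibrium conditions.

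First I would write $\mathcal{M}' = v v^T$ for some vector $v \in \mathbb{R}^{m+n+1}$, which is possible because $\mathcal{M}'$ is PSD and of rank $1$. The constraint $\mathcal{M}'_{m+n+1, m+n+1} = 1$ forces the last entry of $v$ to be $\pm 1$; by replacing $v$ with $-v$ if necessary (which does not change $vv^T$), I may assume the last entry is $+1$. Partitioning $v$ conformally with the block structure of $\mathcal{M}'$, I write $v = (\tilde{x}^T, \tilde{y}^T, 1)^T$ for some $\tilde{x} \in \mathbb{R}^m$, $\tilde{y} \in \mathbb{R}^n$. Comparing to the definition of $\mathcal{M}'$, the last column of $\mathcal{M}'$ is precisely $v$, so $x = \tilde x$ and $y = \tilde y$, and moreover the block structure gives $X = xx^T$, $Y = yy^T$, and $Z = yx^T$ (so $P = xy^T$).

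Next I would verify that $(x,y) \in \triangle_m \times \triangle_n$. The entrywise nonnegativity constraint on $\mathcal{M}'$ applied to its last column and last row yields $x_i \ge 0$ for all $i$ and $y_j \ge 0$ for all $j$, while the unity constraints (\ref{SDP1 Unity x}) and (\ref{SDP1 Unity y}) give $\sum_i x_i = \sum_j y_j = 1$. Hence $x$ and $y$ are valid mixed strategies.

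Finally I would substitute the rank-$1$ expressions into the relaxed Nash constraints. Since $Z = yx^T$, we have $\Tr(AZ) = \Tr(A y x^T) = x^T A y$ and similarly $\Tr(BZ) = x^T B y$. Therefore (\ref{SDP1 Relaxed Nash A}) reads $x^T A y \ge e_i^T A y$ for every $i \in \{1, \dots, m\}$, and (\ref{SDP1 Relaxed Nash B}) reads $x^T B y \ge x^T B e_j$ for every $j \in \{1, \dots, n\}$, which are exactly the Nash equilibrium inequalities in (\ref{QP}). Thus $(x,y)$ is a Nash equilibrium of $(A,B)$.

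There is no real obstacle; the only subtlety is the sign choice for $v$, which is immediately resolved by the nonnegativity of the last column together with the normalization $\mathcal{M}'_{m+n+1,m+n+1}=1$. The proof is essentially a direct unpacking of the rank-$1$ PSD factorization.
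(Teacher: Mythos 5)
Your proof is correct and takes essentially the same route as the paper's: both identify the rank-$1$ PSD factorization $\mathcal{M}' = \xy1vec \xy1vec^T$, note that $x,y$ lie in the simplices via the unity and nonnegativity constraints, and then observe that $\Tr(AZ) = x^TAy$ and $\Tr(BZ) = x^TBy$ turn the relaxed Nash constraints into the exact Nash conditions of (\ref{QP}). You simply spell out more explicitly the sign normalization and the identification of the blocks, which the paper leaves implicit.
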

\begin{proof} We know that $x$ and $y$ are in the simplex from the constraints (\ref{SDP1 Unity x}), (\ref{SDP1 Unity y}), and (\ref{SDP1 Nonnegativity}).\\
If the matrix $\mathcal{M}'$ is rank-1, then it takes the form \begin{equation}\label{Rank 1 form}
\bmat xx^T & xy^T & x \\ yx^T & yy^T & y \\x^T & y^T & 1 \emat
=\xy1vec \xy1vec^T.\end{equation} Then, from the relaxed Nash constraints we have that\\
$$e_i^TAy \le \Tr(AZ) = \Tr(Ayx^T) = \Tr(x^TAy) = x^TAy,$$
$$x^TAe_i \le \Tr(BZ) = \Tr(Byx^T) = \Tr(x^TBy) = x^TBy.$$
The claim now follows from the \jeff{characterization given in (\ref{QP})}.
\end{proof}

\begin{remark}
	Because a Nash equilibrium always exists, there will always be a matrix of the form~(\ref{Rank 1 form}) which is feasible to~\ref{SDP1}. Thus we can disregard any concerns about \ref{SDP1} being feasible, even when we add valid inequalities to it in Section~\ref{Valid Inequalities}.
\end{remark}

\begin{remark}
	It is intuitive to note that the submatrix $P=Z^T$ of the matrix $\M'$ corresponds to a probability distribution over the strategies, and that seeking a rank-1 solution to our SDP can be interpreted as making $P$ a product distribution.
\end{remark}

The following theorem shows that~\ref{SDP1} is a weak relaxation and stresses the necessity of additional valid constraints.

\begin{theorem}\label{Necessity of VI}
	Consider a bimatrix game with payoff matrices bounded in $[0,1]$. Then for any two vectors $x \in \triangle_m$ and $y \in \triangle_n$, there exists a feasible solution $\M'$ to~\ref{SDP1} with $\xy1vec$ as its last column.
\end{theorem}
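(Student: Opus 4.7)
My plan is to prove the theorem by exhibiting an explicit feasible solution. The natural starting point is the rank-1 matrix
\[
\mathcal{M}'_0 \defeq \bmat x\\ y\\ 1 \emat \bmat x^T & y^T & 1\emat,
\]
which has $(x^T,y^T,1)^T$ as its last column, is psd, nonnegative, and automatically satisfies the unity constraints and the condition $\mathcal{M}'_{m+n+1,m+n+1}=1$. The only problem is that its $Z$ block equals $yx^T$, so the relaxed Nash constraint~(\ref{SDP1 Relaxed Nash A}) degenerates to $x^TAy \ge e_i^TAy$, which fails for generic $(x,y)$. The plan is therefore to add a cheap psd, entrywise nonnegative correction that inflates $Z$ enough to make the relaxed Nash constraints trivial, without disturbing the last column.

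Concretely, I would propose the ansatz
\[
\mathcal{M}' \defeq \bmat x\\ y\\ 1\emat \bmat x^T & y^T & 1\emat \;+\; \bmat \1_m\\ \1_n\\ 0\emat \bmat \1_m^T & \1_n^T & 0\emat.
\]
Since the correction vanishes in the last coordinate, the last row/column of $\mathcal{M}'$ is still $(x^T,y^T,1)^T$ and $\mathcal{M}'_{m+n+1,m+n+1}=1$. Both summands are rank-1 psd with nonnegative entries, so $\mathcal{M}'\succeq 0$ and constraint~(\ref{SDP1 Nonnegativity}) holds; the unity constraints hold because $x\in\triangle_m$ and $y\in\triangle_n$.

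The only remaining step is to check the relaxed Nash constraints. The $Z$ block of the ansatz is $Z = yx^T + \1_n\1_m^T$, hence
\[
\Tr(AZ) = x^TAy + \1_m^T A \1_n.
\]
Because $A$ has entries in $[0,1]$ and $y\in\triangle_n$, we have $e_i^TAy = \sum_j A_{ij}y_j \le \sum_j A_{ij} \le \sum_{k,j} A_{kj} = \1_m^TA\1_n$, so $\Tr(AZ)\ge e_i^TAy$ holds for all $i$. The same argument, applied to $B$ with the roles of $x$ and $y$ swapped, yields $\Tr(BZ)\ge x^TBe_j$ for all $j$.

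There is not much of an obstacle here beyond identifying the right correction; once the ansatz is written down, the verification is a sequence of elementary inequalities that rely on nothing more than nonnegativity of the payoff matrices and the fact that $x$ and $y$ lie in the simplex. The qualitative message of the construction is that the $\1\1^T$ correction costs nothing in terms of psd-ness or nonnegativity but makes $\Tr(AZ)$ and $\Tr(BZ)$ as large as needed, which is precisely why the unaugmented~\ref{SDP1} is a weak relaxation and motivates the valid inequalities introduced in Section~\ref{Valid Inequalities}.
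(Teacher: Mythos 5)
Your proof is correct and is essentially the paper's own construction: the paper adds $\gamma J_{m+n,m+n}$ to the upper-left block of $\xy1vec\xy1vec^T$ and takes $\gamma$ large, while you fix $\gamma=1$ and verify the relaxed Nash constraints directly via $e_i^TAy \le \1_m^TA\1_n$. Your explicit bound has the minor advantage of avoiding the paper's separate case for $A$ or $B$ being the zero matrix, but the idea is the same.
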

\begin{proof}
Consider any $x,y,\gamma > 0,$ and the matrix
$$\xy1vec\xy1vec^T + \bmat \gamma J_{m+n,m+n} & 0_{m+n} \\ 0_{m+n}^T & 0\emat.$$ This matrix is the sum of two nonnegative psd matrices and is hence nonnegative and psd. By assumption $x$ and $y$ are in the simplex, and so constraints $(\ref{SDP1 Unity x})-(\ref{SDP1 PSD})$ of~\ref{SDP1} are satisfied. To check that constraints $(\ref{SDP1 Relaxed Nash A})$ and $(\ref{SDP1 Relaxed Nash B})$ hold, note that since $A$ and $B$ are nonnegative, as long as the matrices $A$ and $B$ are not the zero matrices, the quantities $\Tr(AZ)$ and $\Tr(BZ)$ will become arbitrarily large as $\gamma$ increases. Since $e_i^TAy$ and $x^TBe_i$ are bounded by 1 by assumption, we will have that constraints $(\ref{SDP1 Relaxed Nash A})$ and $(\ref{SDP1 Relaxed Nash B})$ hold for $\gamma$ large enough. In the case where $A$ or $B$ is the zero matrix, the Nash constraints are trivially satisfied for the respective player.
\end{proof}

\subsection{Valid Inequalities}\label{Valid Inequalities}

In this subsection, we introduce a number of valid inequalities to improve upon the SDP relaxation in~\ref{SDP1}. These inequalities are justified by being valid if the matrix returned by the SDP is rank-1. The terminology we introduce here to refer to these constraints is used throughout the paper. \jeff{Constraints (\ref{R_X}) and (\ref{R_Y}) will be referred to as the \emph{row inequalities}, and (\ref{CE_A}) and (\ref{CE_B}) will be referred to as the \emph{correlated equilibrium inequalities}.}

\jeff{

\begin{prop}\label{Prop: Valid Inequalities}
	Any rank-1 solution $\M'$ to~\ref{SDP1} must satisfy the following:

	\begin{equation}\label{R_X}
	\sum_{j=1}^m X_{i,j} = \sum_{j=1}^n  P_{i,j} = x_i, \forall i \in \{1,\ldots, m\},\end{equation}
	\begin{equation}\label{R_Y}
	\sum_{j=1}^n Y_{i,j} = \sum_{j=1}^m Z_{i,j} = y_i, \forall i \in \{1,\ldots, n\}.\end{equation}
	\begin{equation}\label{CE_A}
	\sum_{j=1}^n A_{i,j}P_{i,j} \ge \sum_{j=1}^n A_{k,j}P_{i,j}, \forall i,k \in \{1,\ldots, m\},\end{equation}
	\begin{equation}\label{CE_B}
	\sum_{j=1}^m B_{j,i}P_{j,i} \ge \sum_{j=1}^m B_{j,k}P_{j,i}, \forall i,k \in \{1,\ldots, n\}.\end{equation}

\end{prop}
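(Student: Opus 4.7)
The plan is to exploit the rank-$1$ structure established in the proof of the preceding proposition. If $\M'$ is a rank-$1$ feasible solution to~\ref{SDP1}, then $\M' = \xy1vec \xy1vec^T$, so the blocks factor as $X = xx^T$, $P = xy^T$, $Z = yx^T$, and $Y = yy^T$, with $x \in \triangle_m$ and $y \in \triangle_n$. I will verify each of the four groups of inequalities by substituting these factorizations and invoking either the simplex constraints or the fact (already proved) that $(x,y)$ is then a Nash equilibrium of $(A,B)$.

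For the row inequalities (\ref{R_X}) and (\ref{R_Y}), the argument is a one-line computation using the unity constraints~(\ref{SDP1 Unity x})--(\ref{SDP1 Unity y}). For instance,
\begin{equation*}
\sum_{j=1}^m X_{i,j} = \sum_{j=1}^m x_i x_j = x_i \sum_{j=1}^m x_j = x_i, \qquad \sum_{j=1}^n P_{i,j} = \sum_{j=1}^n x_i y_j = x_i \sum_{j=1}^n y_j = x_i,
\end{equation*}
and similarly for (\ref{R_Y}) with the roles of $x$ and $y$ swapped.

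For the correlated equilibrium inequalities (\ref{CE_A}) and (\ref{CE_B}), I would first rewrite both sides using $P_{i,j}=x_i y_j$ to get
\begin{equation*}
\sum_{j=1}^n A_{i,j} P_{i,j} = x_i (Ay)_i, \qquad \sum_{j=1}^n A_{k,j} P_{i,j} = x_i (Ay)_k,
\end{equation*}
so that (\ref{CE_A}) reduces to $x_i\bigl((Ay)_i - (Ay)_k\bigr) \ge 0$ for all $i,k$. This is trivial when $x_i = 0$, and when $x_i > 0$ it follows because $(x,y)$ is a Nash equilibrium (by the previous proposition): every pure strategy in the support of $x$ must be a best response, so $(Ay)_i \ge (Ay)_k$ for every $k$. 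The case (\ref{CE_B}) is symmetric, using $P_{j,i} = x_j y_i$ and the best-response condition for player B.

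There is no real obstacle here; the only subtlety is recognizing that the correlated equilibrium inequalities are not automatic from rank-$1$ factorization alone (the naive bound gives $x_i(Ay)_i \ge x_i(Ay)_k$ only when $(Ay)_i$ is maximal), and thus require invoking the best-response characterization~(\ref{QP}). Since that characterization has already been established for rank-$1$ feasible solutions, the proof is essentially a bookkeeping exercise.
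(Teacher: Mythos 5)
Your proof is correct and follows essentially the same route as the paper's: read off the rank-$1$ block structure $X=xx^T$, $P=xy^T$, $Y=yy^T$, verify the row equalities directly from the unity constraints, and reduce the correlated-equilibrium inequalities to $x_i\bigl((Ay)_i-(Ay)_k\bigr)\ge 0$, which follows from the support/best-response characterization of Nash equilibria (established by the preceding proposition). Your remark that the CE inequalities genuinely need the Nash property and do not follow from rank-$1$ alone is exactly the point the paper's proof makes as well.
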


\begin{proof}
	Recall from (\ref{Rank 1 form}) that if $\M'$ is rank-1, it is of the form $$\left[\begin{matrix}xx^T & xy^T & x \\ yx^T & yy^T & y \\x^T & y^T & 1\end{matrix}\right]	=\xy1vec\xy1vec^T.$$ To show (\ref{R_X}), observe that
	$$\sum_{j=1}^m X_{i,j} = \sum_{j=1}^m x_ix_j = x_i, \forall i \in \{1,\ldots, m\}.$$
	An identical argument works for the remaining matrices $P,Z,$ and $Y$. To show (\ref{CE_A}) and (\ref{CE_B}), observe that a pair $(x, y)$ is a Nash equilibrium if and only if
		$$\forall i, x_i > 0 \Rightarrow e_i^TAy = x^TAy = \underset{i}{\max}\ e_i^TAy,$$
		$$\forall i, y_i > 0 \Rightarrow x^TBe_i = x^TBy = \underset{i}{\max}\ x^TBe_i.$$
	This is because the Nash conditions require that $x^TAy$, a convex combination of $e_i^TAy$, be at least $e_i^TAy$ for all $i$. Indeed, if $x_i > 0$ but $e_i^TAy < x^TAy$, the convex combination must be less than $\underset{i}{\max}\ x^TAy$.
	
	For each $i$ such that $x_i = 0$ or $y_i = 0$, inequalities (\ref{CE_A}) and (\ref{CE_B}) reduce to $0 \ge 0$, so we only need to consider strategies played with positive probability. Observe that if $\M'$ is rank-1, then\\
	$$\sum_{j =1}^n A_{i,j}P_{i,j} = x_i\sum_{j =1}^n A_{i,j}y_j = x_ie_i^TAy \ge x_ie_k^TAy = \sum_{j =1}^nA_{k,j}P_{i,j}, \forall i,k$$
	$$\sum_{j =1}^m B_{j,i}P_{j,i} = y_i\sum_{j =1}^mB_{j,i}x_j = y_i x^TBe_i \ge y_ix^TBe_k = \sum_{j =1}^mB_{j,i}P_{j,k}, \forall i,k.$$
\end{proof}

}

\begin{remark}There are two ways to interpret the inequalities in (\ref{CE_A}) and (\ref{CE_B}): the first is as a relaxation of the constraint $x_i(e_i^TAy - e_j^TAy) \ge 0, \forall i,j$, which must hold since any strategy played with positive probability must give the best response payoff. The other interpretation is to have the distribution over outcomes defined by $P$ be a correlated equilibrium \cite{aumann1974subjectivity}. This can be imposed by a set of linear constraints on the entries of $P$ as explained next.

Suppose the players have access to a public randomization device which prescribes a pure strategy to each of them (unknown to the other player). The distribution over the assignments can be given by a matrix $P$, where $P_{i,j}$ is the probability that strategy $i$ is assigned to player A and strategy $j$ is assigned to player B. This distribution is a correlated equilibrium if both players have no incentive to deviate from the strategy prescribed, that is, if the prescribed pure strategies $a$ and $b$ satisfy
$$\sum_{j=1}^n A_{i,j}Prob(b=j|a=i) \ge \sum_{j=1}^n A_{k,j}Prob(b=j|a=i),$$
$$\sum_{i=1}^m B_{i,j}Prob(a=i|b=j) \ge \sum_{i=1}^m B_{i,k}Prob(a=i|b=j).$$

If we interpret the $P$ submatrix in our SDP as the distribution over the assignments by the public device, then because of our row constraints, $Prob(b=j|a=i)=\frac{P_{i,j}}{x_i}$ whenever $x_i \ne 0$ (otherwise the above inequalities are trivial). Similarly, $P(a=i|b=j)=\frac{P_{i,j}}{y_j}$ for nonzero $y_j$. Observe now that the above two inequalities imply (\ref{CE_A}) and (\ref{CE_B}). Finally, note that every Nash equilibrium generates a correlated equilibrium, since if $P$ is a product distribution given by $xy^T$, then $Prob(b=j|a=i)=y_j$ and $P(a=i|b=j)=x_i$.
\end{remark}

\subsubsection{Implied Inequalities} \label{Implied Inequalities}
In addition to those explicitly mentioned in the previous section, there are other natural valid inequalities which are omitted because they are implied by the ones we have already proposed. We give two examples of such inequalities in the next proposition. We refer to the constraints in~(\ref{Eq: Distribution Inequalities}) below as the \emph{distribution constraints}. The constraints in~(\ref{Eq: McCormick}) are the familiar McCormick inequalities \cite{mccormick1976computability} for box-constrained quadratic programming.

\begin{prop}
	Let $z \defeq \bmat x \\ y \emat$. Any rank-1 solution $\M'$ to~\ref{SDP1} must satisfy the following:
	\begin{equation}\label{Eq: Distribution Inequalities}
	\sum_{i = 1}^m \sum_{j =1}^m X_{i,j} = \sum_{i =1}^n \sum_{j =1}^m Z_{i,j} = \sum_{i =1}^n \sum_{j =1}^n Y_{i,j} = 1.
	\end{equation}
	
	\begin{equation}\label{Eq: McCormick}
	\begin{aligned}
	\M_{i,j} &\le z_i, \forall i,j \in \{1,\ldots,m+n\},\\
	\M_{i,j} + 1 &\ge z_{i}+z_{j}, \forall i,j \in \{1,\ldots,m+n\}.
	\end{aligned}
	\end{equation}
	
\end{prop}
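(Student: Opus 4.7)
The plan is to derive both (\ref{Eq: Distribution Inequalities}) and (\ref{Eq: McCormick}) as algebraic consequences of only the row inequalities (\ref{R_X}), (\ref{R_Y}) together with the unity constraints (\ref{SDP1 Unity x}), (\ref{SDP1 Unity y}) and the nonnegativity constraint (\ref{SDP1 Nonnegativity}) already present in~\ref{SDP1}. Notably, neither the PSD cone nor the correlated equilibrium inequalities will be needed, and the rank-1 structure never has to be used directly; this is what it means for the new inequalities to be implied by the ones already proposed.

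The distribution constraints should fall out by direct summation. I would sum the row identity $\sum_j X_{i,j} = x_i$ from (\ref{R_X}) over $i \in \{1, \ldots, m\}$ and apply the unity of $x$ to conclude $\sum_{i,j} X_{i,j} = 1$. An analogous summation using (\ref{R_Y}) and the unity of $y$ gives the $Y$-block identity, and the $Z$-block follows by summing $\sum_j Z_{i,j} = y_i$ from (\ref{R_Y}) over $i$ and invoking the unity of $y$ once more.

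The upper McCormick bound $\M_{i,j} \le z_i$ should also be immediate. For any index $i \in \{1,\ldots,m+n\}$, the row inequalities (\ref{R_X}) and (\ref{R_Y}) assert that the entries along row $i$ of $\M$ sum to the corresponding component of $(x,y)$, namely $z_i$. Since every entry of $\M'$ is nonnegative, any individual entry of row $i$ is dominated by the sum of that row.

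The lower bound $\M_{i,j} + 1 \ge z_i + z_j$ is the step I expect to be the main obstacle, since it forces one to combine the row sum through one block with the column sum through a possibly different block. My plan is to use the following telescoping chain, illustrated here for the $X$-block:
\begin{equation*}
X_{i,j} \;=\; x_i - \sum_{k \ne j} X_{i,k} \;\ge\; x_i - \sum_{k \ne j} \sum_{l} X_{l,k} \;=\; x_i - \sum_{k \ne j} x_k \;=\; x_i + x_j - 1,
\end{equation*}
where the first equality is (\ref{R_X}), the inequality uses $X_{i,k} \le \sum_l X_{l,k}$ by nonnegativity, the second equality applies (\ref{R_X}) to the column sums of $X$ (valid since $X$ is a symmetric block of $\M'$), and the last step uses the unity of $x$. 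For the $P$-block I would instead combine the row sum $\sum_k P_{i,k} = x_i$ from (\ref{R_X}) with the column sum $\sum_l P_{l,k} = \sum_l Z_{k,l} = y_k$, obtained from $P = Z^T$ together with (\ref{R_Y}), yielding $P_{i,j} + 1 \ge x_i + y_j$. Analogous case analysis for the $Y$- and $Z$-blocks then closes the proof. The only real bookkeeping is tracking which of the two row inequalities provides the row sum and which provides the column sum in each of the four blocks of $\M$.
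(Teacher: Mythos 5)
Your proposal is correct and follows essentially the same route as the paper: the distribution constraints by summing the row identities against the unity constraints, the upper McCormick bound from nonnegativity of a row, and the lower bound from the complementary row/column sums of the relevant block (your telescoping chain $P_{i,j} \ge x_i - \sum_{k\ne j} y_k = x_i + y_j - 1$ is just a rearrangement of the paper's observation that $0 \le \sum_{k\ne i}\sum_{l\ne j} P_{k,l} = P_{i,j}+1-x_i-y_j$). Your explicit remark that only nonnegativity, the row inequalities, and the unity constraints are needed also matches the paper's framing of these as implied inequalities.
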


\begin{proof}
	The distribution constraints follow immediately from the row constraints (\ref{R_X}) and (\ref{R_Y}), along with the unity constraints (\ref{SDP1 Unity x}) and (\ref{SDP1 Unity y}).
	
	The first McCormick inequality is immediate as a consequence of (\ref{R_X}) and (\ref{R_Y}), as all entries of $\M$ are nonnegative. To see why the second inequality holds, consider whichever submatrix $X, Y, P$, or $Z$ that contains $\M_{i,j}$. Suppose that this submatrix is, e.g., $P$. Then, since $P$ is nonnegative,
	$$0 \le \sum_{k=1, k \ne i}^{m}\sum_{l = 1, l \ne j}^n P_{k,l} \overset{(\ref{R_X})}{=} \sum_{k=1, k \ne i}^m (x_k - P_{k,j})  \overset{(\ref{R_Y})}{=} (1-x_i) - (y_j - P_{i,j}) = P_{i,j}+1-x_i-y_j.$$
	
	The same argument holds for the other submatrices, and this concludes the proof.
\end{proof}

\jeffr{
\subsection{Simplifying our SDP}\label{SSec: Simplification}

We observe that the row constraints (\ref{R_X}) and (\ref{R_Y}) along with the correlated equilibrium constraints (\ref{CE_A}) and (\ref{CE_B}) imply the relaxed Nash constraints (\ref{SDP1 Relaxed Nash A}) and (\ref{SDP1 Relaxed Nash B}). Indeed, if we fix an index $k~\in~\{1, \ldots, m\}$, then
$$\Tr(AZ) = \sum_{i=1}^m \sum_{j=1}^n A_{i,j} P_{i,j} \overset{(\ref{CE_A})}{\ge} \sum_{i=1}^m \sum_{j=1}^n A_{k,j} P_{i,j} \ge \sum_{j=1}^n A_{k,j} (\sum_{i=1}^m P_{i,j}) \overset{(\ref{R_Y}), P = Z^T}{\ge} \sum_{j=1}^n A_{k, j} y_j = e_k^TAy.$$
The proof for player B proceeds identically. Then, after collecting the valid inequalities and removing the relaxed Nash constraints, we arrive at an SDP given by
\begin{samepage}
	\begin{align}\label{SDP1.1}\tag{SDP1'}
	& \underset{\M' \in S^{(m+n+1)\times(m+n+1)}}{\min}
	& & 0 \\
	& \text{subject to}
	& & (\ref{SDP1 Unity x})-(\ref{SDP1 PSD}), (\ref{R_X})-(\ref{CE_B}).\nonumber
	\end{align}
\end{samepage}

\jeffg{We make the observation that the last row and column of $\M'$ can be removed from this SDP, that is, there is a one-to-one correspondence between solutions to \ref{SDP1.1} and those to the following SDP (where $\mathcal{M} \mathrel{\mathop:}= \left[\begin{matrix} X & P\\ Z & Y\end{matrix}\right],$ with $P = Z^T$):}

\jeffr{
\begin{samepage}
	\begin{align}\label{SDP2}\tag{SDP2}
	& \underset{\M \in S^{(m+n)\times(m+n)}}{\min}
	& & 0 \\
	& \text{subject to}
	& & \M \succeq 0, &\label{SDP2 PSD}\\
	&&& \M \ge 0, &\label{SDP2 Nonnegativity}\\
	&&& \sum_{i=1}^n \sum_{j=1}^n P_{i,j} = 1, &\label{SDP2 Distribution}\\
	&&& \sum_{j=1}^m X_{i,j} = \sum_{j=1}^n  P_{i,j}, \forall i \in \{1,\ldots, m\}, & \label{SDP2 Row x}\\
	&&& \sum_{j=1}^n Y_{i,j} = \sum_{j=1}^m Z_{i,j}, \forall i \in \{1,\ldots, n\}, & \label{SDP2 Row y}\\
	&&& \sum_{j=1}^n A_{i,j}P_{i,j} \ge \sum_{j=1}^n A_{k,j}P_{i,j}, \forall i,k \in \{1,\ldots, m\}, & \label{SDP2 CE A}\\
	&&& \sum_{j=1}^m B_{j,i}P_{j,i} \ge \sum_{j=1}^m B_{j,k}P_{j,i}, \forall i,k \in \{1,\ldots, n\}. & \label{SDP2 CE B}
	\end{align}
\end{samepage}

Indeed, it is readily verified that the submatrix $\M$ from any feasible solution $\M'$ to \ref{SDP1.1} is feasible to \ref{SDP2}. Conversely, let $\M$ be any feasible matrix to \ref{SDP2}. Consider an eigendecomposition $\M = \sum_{i=1}^k \lambda_i v_i v_i^T$ and let $\bmat x\\y\emat \defeq \M \frac{1_{m+n}}{2}.$ Then the matrix
\begin{equation}\label{Eq: M psd M' psd} \M' \defeq \bmat \M & \bmat x \\ y\emat \\ \bmat x^T & y^T \emat & 1 \emat = \sum_{i=1}^k \lambda_i \bmat v_i\\ 1_{m+n}^Tv_i/2 \emat \bmat v_i\\ 1_{m+n}^Tv_i/2 \emat^T\end{equation}}
is easily seen to be feasible to \ref{SDP1.1}.}

\jeffg{Given any feasible solution $\M$ to \ref{SDP2}, observe that the submatrix $P$ is a correlated equilibrium. We take our candidate approximate Nash equilibrium to be the pair $x = P1_n$ and $y = P^T1_m$. If the correlated equilibrium $P$ is rank-1, then the pair $(x,y)$ so defined constitutes an exact Nash equilibrium. In Section~\ref{Algorithms}, we will add certain objective functions to \ref{SDP2} with the interpretation of searching for low-rank correlated equilibria.}

\section{Exactness for \jeff{Strictly Competitive} Games}\label{Exactness Zero Sum}

\jeff{In this section, we show that \ref{SDP1} recovers a Nash equilibrium for any zero-sum game, and that \ref{SDP2} recovers a Nash equilibrium for any strictly competitive game (see Definition \ref{Strictly Competitive Def} below). Both these notions represent games where the two players are in direct competition, but strictly competitive games are more general, and for example, allow both players to have nonnegative payoff matrices. These classes of games are solvable in polynomial time via linear programming. Nonetheless, it is reassuring to know that our SDPs recover these important special cases.}
\begin{defn}\label{Zero Sum Def}
	A \emph{zero-sum game} is a game in which the payoff matrices satisfy $A = -B$.
\end{defn}

\begin{theorem}\label{Zero Sum}
	For a zero-sum game, the vectors $x$ and $y$ from the last column of any feasible solution $\M'$ to~\ref{SDP1} constitute a Nash equilibrium.
\end{theorem}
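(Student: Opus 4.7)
The plan is to exploit the fact that in a zero-sum game, the constraint $B = -A$ collapses the two separate relaxed Nash inequalities (\ref{SDP1 Relaxed Nash A}) and (\ref{SDP1 Relaxed Nash B}) into matching upper and lower bounds on the single scalar $\Tr(AZ)$. Specifically, substituting $B = -A$ into (\ref{SDP1 Relaxed Nash B}) and flipping signs gives $\Tr(AZ) \le x^T A e_j$ for all $j \in \{1,\ldots,n\}$, while (\ref{SDP1 Relaxed Nash A}) says $e_i^T A y \le \Tr(AZ)$ for all $i \in \{1,\ldots,m\}$. So I first note this combined sandwich, then aim to pinch $\Tr(AZ)$ to exactly $x^T A y$.

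Next, I would use the unity constraints (\ref{SDP1 Unity x}) and (\ref{SDP1 Unity y}) together with the entrywise nonnegativity (\ref{SDP1 Nonnegativity}) (which, restricted to the last row/column of $\M'$, forces $x \in \triangle_m$ and $y \in \triangle_n$). Taking the convex combination of $e_i^T A y \le \Tr(AZ)$ with weights $x_i$ yields $x^T A y \le \Tr(AZ)$, and taking the convex combination of $\Tr(AZ) \le x^T A e_j$ with weights $y_j$ yields $\Tr(AZ) \le x^T A y$. Combining gives $\Tr(AZ) = x^T A y$.

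Substituting this equality back into the two sandwich inequalities yields $e_i^T A y \le x^T A y$ for all $i$ and $x^T A e_j \ge x^T A y$ for all $j$. The first is precisely the best-response condition for player A, and the second, rewritten via $B = -A$ as $x^T B e_j \le x^T B y$, is the best-response condition for player B. Hence $(x,y)$ meets the characterization in (\ref{QP}) and is a Nash equilibrium.

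I do not anticipate a serious obstacle here; the proof is essentially a sign-flipping and convex-combination argument. The only subtlety worth being careful about is making sure that $x \ge 0$ and $y \ge 0$ (needed to take nonnegative convex combinations of the inequalities) are extracted from the entrywise nonnegativity of $\M'$ rather than assumed, and that the zero-sum structure is used in exactly the right place to turn one relaxed Nash inequality into the reverse direction of the other. No valid inequalities from Section~\ref{Valid Inequalities} are required, which makes the result slightly stronger: \ref{SDP1} alone suffices for zero-sum games.
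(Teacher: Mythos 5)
Your proof is correct and follows essentially the same route as the paper's: both derive the sandwich $e_i^TAy \le \Tr(AZ) \le x^TAe_j$ from $B=-A$ and then use convex combinations over the simplex vectors $x$ and $y$ to extract the two best-response conditions. The only cosmetic difference is that you explicitly pinch $\Tr(AZ)$ to equal $x^TAy$ before substituting back, whereas the paper applies the convex-combination step directly to the chained inequality $e_i^TAy \le x^TAe_j$; these are equivalent.
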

\begin{proof}
	Recall that the relaxed Nash constraints (\ref{SDP1 Relaxed Nash A}) and (\ref{SDP1 Relaxed Nash B}) read
	$$\Tr(AZ) \ge e_i^TAy, \forall i \in \{1,\ldots, m\},$$
	$$\Tr(BZ) \ge x^TBe_j, \forall j \in \{1,\ldots, n\}.$$
	Since $B=-A$, the latter statement is equivalent to
	$$\Tr(AZ) \le x^TAe_j, \forall j \in \{1,\ldots, n\}.$$
	In conjunction these imply
	\begin{equation}\label{eq: zero sum inequality}
	e_i^TAy \le \Tr(AZ) \le x^TAe_j, \forall i \in \{1,\ldots, m\}, j \in \{1,\ldots, n\}.
	\end{equation}
	
	We claim that any pair $x \in \triangle_m$ and $y \in \triangle_n$ which satisfies the above condition is a Nash equilibrium. To see that $x^TAy \ge e_i^TAy, \forall i \in \{1,\ldots, m\},$ observe that $x^TAy$ is a convex combination of $x^TAe_j$, which are at least $e_i^TAy$ by (\ref{eq: zero sum inequality}). To see that $x^TBy \ge x^TBe_j \Leftrightarrow x^TAy \le x^TAe_j, \forall j \in \{1,\ldots, n\}$, observe that $x^TAy$ is a convex combination of $e_i^TAy$, which are at most $x^TAe_j$ by (\ref{eq: zero sum inequality}).
	
\end{proof}

\jeff{
\begin{defn}\label{Strictly Competitive Def}
	A game $(A, B)$ is \emph{strictly competitive} if for all $x, x' \in \triangle_m, y, y' \in \triangle_n$, $x^TAy - x'^TAy'$ and $x'^TBy' - x^TBy$ have the same sign.
\end{defn}

The interpretation of this definition is that if one player benefits from changing from one outcome to another, the other player must suffer. Adler, Daskalakis, and Papadimitriou show in \cite{adler2009note} that the following much simpler characterization is equivalent.

\begin{theorem}[Theorem 1 of \cite{adler2009note}]\label{Thm: Adler}
	A game is strictly competitive if and only if there exist scalars $c,d,e$, and $f,$ with $c > 0, e > 0,$ such that $cA+dJ_{m \times n} = -eB + fJ_{m \times n}$.
\end{theorem}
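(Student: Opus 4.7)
The plan is to handle both directions separately. The reverse direction is a routine calculation: if $cA + dJ_{m \times n} = -eB + fJ_{m \times n}$ with $c, e > 0$, then since $x^T J_{m \times n} y = 1$ for any $(x,y) \in \triangle_m \times \triangle_n$, sandwiching both sides between $x^T$ and $y$ yields $c\, x^T A y + d = -e\, x^T B y + f$; subtracting the same identity at $(x',y')$ gives $c(x^T A y - x'^T A y') = e(x'^T B y' - x^T B y)$, and the two sides share sign because $c, e > 0$.

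For the forward direction, set $V = T(\triangle_m \times \triangle_n)$ where $T(x,y) = (x^T A y, x^T B y)$. Strict competitiveness says exactly that $V$ lies on the graph of a strictly decreasing function $v = h(u)$. The main step is to upgrade this to $h$ being affine, so that $V$ lies on a single line $v = \alpha u + \beta$ with $\alpha \le 0$. Granting this, $B_{ij} = \alpha A_{ij} + \beta$ for all $i, j$, i.e., $B = \alpha A + \beta J_{m \times n}$, and the desired affine identity follows by picking any $e > 0$ and taking $c = -e\alpha > 0$, $d = 0$, $f = e\beta$ (when $\alpha < 0$; the case $\alpha = 0$ forces $A$ constant by strict competitiveness and is handled separately).

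To prove $h$ is affine, I would fix any indices $i_1, i_2$ and $j_1, j_2$ and examine the bilinear map $\Phi(\lambda, \mu) = T(\lambda e_{i_1} + (1-\lambda) e_{i_2},\, \mu e_{j_1} + (1-\mu) e_{j_2})$ on $[0,1]^2$. A short computation shows that its two partial derivatives at a point $(\lambda, \mu)$ are convex combinations of $\{p_{11}-p_{21},\, p_{12}-p_{22}\}$ and of $\{p_{11}-p_{12},\, p_{21}-p_{22}\}$ respectively, where $p_{kl} = (A_{i_k j_l}, B_{i_k j_l})$. If the four corner points $\{p_{kl}\}$ are not collinear in $\mathbb{R}^2$, these four difference vectors are not all parallel, so $D\Phi$ has rank $2$ somewhere in $(0,1)^2$; by the inverse function theorem the image of $\Phi$ then contains an open subset of $\mathbb{R}^2$, contradicting $V \subseteq \mathrm{graph}(h)$. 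Hence every four corner values are collinear, and by the usual pasting argument (two distinct common points determine a unique line) all of $\{(A_{ij}, B_{ij})\}_{i,j}$ lie on a common line, giving the affine $h$ and completing the proof.

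The main obstacle is the degeneracy analysis surrounding this last step: corner points may coincide pairwise, an entire row or column of $(A, B)$ may be constant, and the pasting of local lines requires enough distinct points to pin down a unique global line. These situations are manageable by a careful case split (or by first reducing to a nondegenerate subgame by deleting duplicated pure strategies and extending the line back to the full game at the end), but none of them alters the conclusion.
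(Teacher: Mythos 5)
The paper does not actually prove this theorem---it is imported verbatim as Theorem~1 of the cited Adler--Daskalakis--Papadimitriou note---so I am evaluating your argument on its own. Your reverse direction is correct, and the core of your forward direction is sound: the image $V=T(\triangle_m\times\triangle_n)$ must lie in the graph of a function and hence have empty interior, and expanding the determinant of $D\Phi$ (using that $(p_{11}-p_{21})-(p_{12}-p_{22})=(p_{11}-p_{12})-(p_{21}-p_{22})$) does show that identically singular $D\Phi$ forces the four corners to be collinear. The slope sign analysis and the $\alpha=0$ case are also fine.

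The genuine gap is the pasting step. It is simply false that collinearity of every $2\times 2$ sub-rectangle implies global collinearity: take a game in which all rows of the pair $(A,B)$ are identical, say $(A_{ij},B_{ij})=p_j$ for every $i$, with three of the $p_j$ non-collinear. Every $2\times 2$ rectangle then contains at most two distinct points and is trivially collinear, yet the full point set is not. (The transposed configuration, with all columns identical, is another instance.) Your proposed remedy of deleting duplicated pure strategies does not rescue this: the reduced game is $1\times n$, where $\Phi$ cannot even be formed because there are not two distinct rows to mix, so the $2\times 2$ machinery produces no line to ``extend back.'' The point is that the $2\times 2$ rectangles do not capture all of strict competitiveness; you must also use mixtures of three or more pure strategies of one player against a fixed strategy of the other. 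The fix is cheap and entirely in the spirit of your argument: $T(e_i,\cdot)$ maps $\triangle_n$ onto $\mathrm{conv}\{(A_{ij},B_{ij})\}_{j}$, which has nonempty interior unless these points are collinear, so every row (and by symmetry every column) of the grid is collinear on its own; combined with your rectangle lemma this pins down a single global line in all cases. As written, though, the proof is incomplete at exactly the step you wave at with ``the usual pasting argument.''
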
}

One can easily show that there exist \jeff{strictly competitive games} for which not all feasible solutions to~\ref{SDP1} have Nash equilibria as their last columns (see Theorem~\ref{Necessity of VI}). However, we show that this is the case for~\ref{SDP2}.

\jeff{\begin{theorem}\label{Strictly Competitive Exact}
	For a \jeff{strictly competitive game}, \jeffo{the vectors $x \defeq P1_n$ and $y \defeq P^T1_m$ from any feasible solution $\M$ to~\ref{SDP2} constitute a Nash equilibrium.}
\end{theorem}

To prove Theorem \ref{Strictly Competitive Exact} we need the following lemma, which shows that feasibility of a matrix \jeffg{$\M$} in~\ref{SDP2} is invariant under certain transformations of $A$ and $B$.
}

\begin{lem}\label{scaling shifting}
	Let $c,d,e$, and $f$ be any set of scalars with $c > 0$ and $e > 0$. If a matrix \jeffg{$\M$} is feasible to~\ref{SDP2} with input payoff matrices $A$ and $B$, then it is also feasible to~\ref{SDP2} with input matrices $cA+dJ_{m \times n}$ and $eB + fJ_{m \times n}$.
\end{lem}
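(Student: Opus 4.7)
The plan is to check, constraint by constraint, that applying the affine transformation $A \mapsto cA + dJ_{m\times n}$, $B \mapsto eB + fJ_{m\times n}$ preserves feasibility of $\mathcal{M}'$ in~\ref{SDP2}. The constraints that do not reference $A$ or $B$ at all (the PSD condition, entrywise nonnegativity, the $(m+n+1,m+n+1)$ normalization, the unity constraints, and the row constraints) are clearly unaffected, so only the relaxed Nash constraints and the correlated equilibrium constraints need to be examined.

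First I would handle the relaxed Nash constraint for player A. Substituting $A' = cA + dJ_{m\times n}$ gives
\[
\Tr(A'Z) - e_i^T A' y = c\bigl(\Tr(AZ) - e_i^T A y\bigr) + d\bigl(\Tr(J_{m\times n}Z) - e_i^T J_{m\times n} y\bigr).
\]
The key identity is $\Tr(J_{m\times n}Z) = \sum_{i,j} Z_{i,j} = 1$, which follows from the row constraints~(\ref{SDP2 Row y}) combined with the unity constraint~(\ref{SDP2 Unity}), while $e_i^T J_{m\times n} y = \sum_j y_j = 1$ by~(\ref{SDP2 Unity}). Hence the $d$-term cancels, and the expression equals $c$ times the original left-hand side, which is nonnegative since $c>0$. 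An entirely parallel computation handles the relaxed Nash constraint for player B using $\Tr(J_{m\times n}Z) = \sum_j x_j = 1$ (via the other half of the row constraints) and $x^T J_{m\times n} e_j = \sum_i x_i = 1$.

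Next I would verify the correlated equilibrium constraints. For $A' = cA + dJ_{m\times n}$, compute
\[
\sum_{j=1}^n A'_{i,j} P_{i,j} - \sum_{j=1}^n A'_{k,j} P_{i,j} = c\Bigl(\sum_{j=1}^n A_{i,j} P_{i,j} - \sum_{j=1}^n A_{k,j} P_{i,j}\Bigr) + d\Bigl(\sum_{j=1}^n P_{i,j} - \sum_{j=1}^n P_{i,j}\Bigr),
\]
and the $d$-term vanishes identically without even needing the row constraints; the remaining $c$-multiple is nonnegative by feasibility of $\mathcal{M}'$ in~\ref{SDP2} with $(A,B)$. The argument for the $B$ correlated equilibrium constraint is symmetric.

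There is no real obstacle here; the proof is a direct verification. The only point that requires care is recognizing that the additive shifts $dJ_{m\times n}$ and $fJ_{m\times n}$ contribute terms that telescope to zero either trivially (as in the CE constraints) or by invoking the row and unity constraints (as in the relaxed Nash constraints), and that the positivity of $c$ and $e$ is exactly what preserves the direction of the inequalities under scaling.
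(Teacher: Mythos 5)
Your proposal is correct and follows essentially the same route as the paper: isolate the relaxed Nash and correlated equilibrium constraints as the only ones involving $A$ and $B$, use $\Tr(J_{m\times n}Z)=1$ (from the row and unity constraints) to cancel the additive shift in the Nash constraints, and observe that the shift contributes equally to both sides of each CE constraint. Your remark that the CE cancellation needs no row constraint is a small simplification over the paper, which identifies $\sum_j P_{i,j}$ as $x_i$ before cancelling, but the argument is otherwise identical.
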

\begin{proof}

\jeffr{
It suffices to check that constraints (\ref{SDP2 CE A}) and (\ref{SDP2 CE B}) of~\ref{SDP2} still hold, as only the correlated equilibrium constraints use the matrices $A$ and $B$. We only show that constraint (\ref{SDP2 CE A}) still holds because the argument for constraint (\ref{SDP2 CE B}) is identical.

Note from the definition of $x$ that for each $i \in \{1,\ldots, m\}, x_i = \sum_{j=1}^n (J_{m\times n})_{i,j}P_{i,j}$. To check that the correlated equilibrium constraints hold, observe that for scalars $c>0,d$, and for all $i,k \in \{1,\ldots, m\}$,
\begin{equation*}
\begin{aligned}
\sum_{j=1}^n A_{i,j}P_{i,j} &\ge \sum_{j=1}^n A_{k,j}P_{i,j}\\
\Leftrightarrow c\sum_{j=1}^n A_{i,j}P_{i,j} + d \sum_{j=1}^n P_{i,j} &\ge c\sum_{j=1}^n A_{k,j}P_{i,j} + d \sum_{j=1}^n P_{i,j}\\
\Leftrightarrow c\sum_{j=1}^n A_{i,j}P_{i,j} + d\sum_{j=1}^n (J_{m \times n})_{i,j}P_{i,j} &\ge c\sum_{j=1}^n A_{k,j}P_{i,j} + d\sum_{j=1}^n (J_{m \times n})_{k,j}P_{i,j}\\
\Leftrightarrow \sum_{j=1}^n (cA_{i,j}+dJ_{m \times n})_{k,j}P_{i,j} &\ge \sum_{j=1}^n (cA_{i,j}+dJ_{m \times n})_{k,j}P_{i,j}.
\end{aligned}
\end{equation*}
}
\end{proof}

\begin{proof}[\jeff{Proof (of Theorem \ref{Strictly Competitive Exact})}]
	Let $A$ and $B$ be the payoff matrices of the given \jeff{strictly competitive game} and let \jeffo{$\M$} be a feasible solution to~\ref{SDP2}. Since the game is \jeff{strictly competitive}, we know from Theorem \ref{Thm: Adler} that $cA + dJ_{m\times n} = -eB + fJ_{m\times n}$ for some scalars $c>0,e>0,d, f$. Consider a new game with input matrices $\tilde{A} = cA+dJ_{m\times n}$ and $\tilde{B} = eB - fJ_{m\times n}$. By Lemma~\ref{scaling shifting}, $\M$ is still feasible to~\ref{SDP2} with input matrices $\tilde{A}$ and $\tilde{B}$. \jeffg{By the arguments in Section \ref{SSec: Simplification}, the matrix $\M' \defeq \bmat \M & \bmat x \\ y\emat \\ \bmat x^T & y^T \emat & 1 \emat$ is feasible to \ref{SDP1.1}, and hence also to~\ref{SDP1}.} Now notice that since $\tilde{A} = -\tilde{B}$, Theorem~\ref{Zero Sum} implies that the vectors $x$ and $y$ in the last column form a Nash equilibrium to the game $(\tilde{A}, \tilde{B})$. Finally recall from the arguments at the beginning of Section~\ref{Intro to SDP} that Nash equilibria are invariant to scaling and shifting of the payoff matrices, and hence $(x, y)$ is a Nash equilibrium to the game $(A, B)$.
\end{proof}


\section{Algorithms for Lowering Rank}\label{Algorithms}

In this section, we present heuristics which aim to find low-rank solutions to~\ref{SDP2} and present some empirical results. Recall that our~\ref{SDP2} in Section~\ref{SSec: Simplification} did not have an objective function. Hence, we can encourage low-rank solutions by choosing certain objective functions, in particular the trace of the matrix $\M$, which is a general heuristic for minimizing the rank of symmetric matrices~\cite{recht2010guaranteed,fazel2002matrix}. 
This simple objective function is already guaranteed to produce a rank-1 solution in the case of \jeff{strictly competitive games} (see Proposition~\ref{Zero Sum Rank 1} below). 
For general games, however, one can design better objective functions in an iterative fashion (see Section~\ref{Linearization Algorithms}).

\jeffg{\emph{Notational Remark:} For the remainder of this section, we will use the shorthand $x \defeq P1_n$ and $y \defeq P^T1_m$, where $P$ is the upper right submatrix of a feasible solution $\M$ to \ref{SDP2}.}

\begin{prop}\label{Zero Sum Rank 1} For a \jeff{strictly competitive game}, any optimal solution to~\ref{SDP2} with $\Tr(\M)$ as the objective function must be rank-1.\end{prop}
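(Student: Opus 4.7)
The plan is to exhibit, given any optimal solution $\M'$, a rank-1 feasible solution with trace no larger than that of $\M'$, and then to argue that the optimality forces $\M'$ itself to coincide with this rank-1 construction.

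First I would take an arbitrary optimal $\M'$ and let $x,y$ denote the vectors from its last column. By Theorem~\ref{Strictly Competitive Exact}, $(x,y)$ is a Nash equilibrium of the strictly competitive game $(A,B)$. I would then construct the candidate rank-1 solution
$$\tilde{\M}' \defeq \bmat x\\ y\\ 1\emat\bmat x\\ y\\ 1\emat^T,$$
and verify it is feasible to~\ref{SDP2}. The PSD, nonnegativity, unity, corner, and row constraints are immediate. The relaxed Nash constraints follow from $\Tr(A\tilde{Z})=\Tr(Ayx^T)=x^TAy\ge e_i^TAy$, which holds because $(x,y)$ is a NE; similarly for player~B. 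For the correlated equilibrium constraints, note that $\sum_j A_{i,j}\tilde P_{i,j}=x_i\,e_i^TAy$ and $\sum_j A_{k,j}\tilde P_{i,j}=x_i\,e_k^TAy$, and whenever $x_i>0$ the NE property gives $e_i^TAy=\max_l e_l^TAy\ge e_k^TAy$; the constraint is trivial when $x_i=0$.

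Next I would establish the trace lower bound $\Tr(\M)\ge \|x\|^2+\|y\|^2$. Since $\M'\succeq 0$, the principal submatrix $\bmat X & x\\ x^T & 1\emat$ is PSD, so by Schur complement $X-xx^T\succeq 0$ and hence $\Tr(X)\ge x^Tx$. The analogous argument gives $\Tr(Y)\ge y^Ty$. Combining,
$$\Tr(\M)\ge \|x\|^2+\|y\|^2 = \Tr(\tilde{\M}).$$
By optimality of $\M'$ and feasibility of $\tilde{\M}'$, the reverse inequality $\Tr(\M)\le\Tr(\tilde{\M})$ also holds, so equality is forced.

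Equality in the two Schur-complement inequalities, together with $X-xx^T\succeq 0$ and $Y-yy^T\succeq 0$, forces $X=xx^T$ and $Y=yy^T$. Applying Schur complement once more to the entire $\M'$ yields
$$\bmat X-xx^T & P-xy^T\\ P^T-yx^T & Y-yy^T\emat\succeq 0,$$
and a PSD matrix with both diagonal blocks equal to zero must have zero off-diagonal blocks as well, so $P=xy^T$. Thus $\M'=\tilde{\M}'$, which is rank-1. The only subtlety in the argument is making sure the CE inequalities are satisfied by the rank-1 candidate, which is where the NE property (guaranteed by strict competitiveness via Theorem~\ref{Strictly Competitive Exact}) enters crucially; the rest of the proof is a straightforward combination of Schur complement and optimality.
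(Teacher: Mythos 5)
Your proof is correct and follows essentially the same route as the paper's: invoke Theorem~\ref{Strictly Competitive Exact} to get that $(x,y)$ is a Nash equilibrium, use the Schur complement to bound $\Tr(\M)\ge x^Tx+y^Ty$, and compare against the feasible rank-1 candidate built from the same $(x,y)$. Your version is somewhat more careful than the paper's—explicitly checking the correlated equilibrium constraints for the rank-1 candidate and explicitly forcing $P=xy^T$ from the zero diagonal blocks—but the underlying argument is identical.
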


\begin{proof}
	
	\jeffr{Let $$\M \mathrel{\mathop:}= \bmat X & P\\ P^T & Y\emat$$ be a feasible solution to~\ref{SDP2}.} In the case of \jeff{strictly competitive games}, from Theorem~\ref{Strictly Competitive Exact} we know that that $(x, y)$ is a Nash equilibrium. Then because the matrix $\M$ is psd, \jeffg{from (\ref{Eq: M psd M' psd}) and an application of the Schur complement \jeff{(see, e.g. \cite[Sect. A.5.5]{boyd2004convex})} to $\bmat \M & \bmat x \\ y\emat \\ \bmat x^T & y^T \emat & 1 \emat$, we have that $\M \succeq \xyvec\xyvec^T$. Hence, $\M = \bmat xx^T & xy^T \\ yx^T & yy^T\emat+ \mathcal{P}$ for some psd matrix $\mathcal{P}$ and \jeffg{the} Nash equilibrium $(x, y)$.} Given this expression, the objective \jeffg{function $\Tr(\M)$} is then $x^Tx + y^Ty + \Tr(\mathcal{P})$. As $(x,y)$ is a Nash equilibrium, the choice of $\mathcal{P} = 0$ results in a feasible solution. Since the zero matrix has the minimum possible trace among all psd matrices, the solution will be the rank-1 matrix $\xyvec\xyvec^T$.\end{proof}

\jeffg{\begin{remark}
If the row constraints and the nonnegativity constraints on $X$ and $Y$ are removed from \ref{SDP2}, then this SDP with $\Tr(\M)$ as the objective function can be interpreted as searching for a minimum-rank correlated equilibrium $P$ via the nuclear norm relaxation; see~\cite[Section 2]{recht2010guaranteed}.
\end{remark}}

\subsection{Linearization Algorithms}\label{Linearization Algorithms}
The algorithms we present in this section \jeffg{for minimzing the rank of the matrix $\M$ in \ref{SDP2}} are based on iterative linearization of certain nonconvex objective functions. Motivated by the next proposition, we design two continuous (nonconvex) objective functions that, if minimized exactly, would guarantee rank-1 solutions. We will then linearize these functions iteratively.


\begin{prop}\label{diagonal sufficiency} Let the matrices $X$ and $Y$ and vectors \jeffr{$x \defeq P1_n$ and $y \defeq P^T1_m$} be taken from a feasible solution \jeffr{}to~\ref{SDP2}. Then the matrix $\M$ is rank-1 if and only if $X_{i,i} = x_i^2$ and $Y_{i,i} = y_i^2$ for all $i$.\end{prop}
\begin{proof}
\jeffr{Note that if $\M$ is rank-1, then it can be written as $zz^T$ for some $z \in \mathbb{R}^{m+n}$. The $i$-th diagonal entry in the $X$ submatrix will then be equal to $$z_i^2 \overset{(\ref{Eq: Distribution Inequalities})}{=} \frac{1}{4}z_i^2(1_{m+n}^Tzz^T1_{m+n}) = (\frac{1}{2}\M_{i,}1_{m+n})^2 \overset{(\ref{R_X})}{=} (P_{i,}1_n)^2 = x_i^2,$$ where the second equality holds because $\M_{i,}$---the $i$-th row of $\M$---is $z_iz^T$. An analogous statement holds for the diagonal entries of $Y$, and hence the condition is necessary.}
	
\jeffr{To show sufficiency, let $z \defeq \xyvec$.} Since $\M$ is psd, we have that $\M_{i,j} \le \sqrt{\M_{i,i}\M_{j,j}}$, which implies $\M_{i,j} \le z_iz_j$ by the assumption of the proposition. \jeffo{Recall from the distribution constraint~(\ref{Eq: Distribution Inequalities}) that $\sum_{i=1}^{m+n}\sum_{j=1}^{m+n} \M_{i,j}=4$. Further, the same constraint along with the definitions of $x$ and $y$ imply that $\sum_{i=1}^{m+n} z_i = 2$, which means that $\sum_{i=1}^{m+n} \sum_{j=1}^{m+n} z_iz_j = 4$. Hence in order to have the equality}
	
	$$4 = \sum_{i=1}^{m+n} \sum_{j=1}^{m+n} \M_{i,j} \le \sum_{i=1}^{m+n} \sum_{j=1}^{m+n} z_iz_j = 4,$$
	we must have $\M_{i,j} = z_iz_j$ for each $i$ and $j$. Consequently $\M$ is rank-1.\end{proof}

We focus now on two nonconvex objectives that as a consequence of the above proposition would return rank-1 solutions:
\begin{prop}\label{nonconvex objectives} All optimal solutions to~\ref{SDP2} with the objective function $\sum_{i=1}^{m+n} \sqrt{\M_{i,i}}$ or $\Tr(\M) - x^Tx - y^Ty$ are rank-1.\end{prop}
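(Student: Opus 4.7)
The plan is to derive both rank-1 guarantees directly from Proposition~\ref{diagonal sufficiency}, using the fact that the diagonal entries of $\M$ are bounded below by $z_i^2$, where $z = \xyvec$. Specifically, since $\M'\succeq 0$ and its bottom-right entry equals $1$, the Schur complement yields $\M \succeq zz^T$, which in particular gives $\M_{i,i}\ge z_i^2$ for every $i\in\{1,\dots,m+n\}$. Moreover, by the unity constraints (\ref{SDP2 Unity}), $\sum_{i=1}^{m+n} z_i = 2$. These two facts are the only inputs needed.

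For the objective $f_1(\M) \defeq \sum_{i=1}^{m+n}\sqrt{\M_{i,i}}$, the inequality $\sqrt{\M_{i,i}}\ge z_i$ (valid because both sides are nonnegative) combined with the unity constraint gives
\begin{equation*}
f_1(\M) \;\ge\; \sum_{i=1}^{m+n} z_i \;=\; 2,
\end{equation*}
with equality if and only if $\M_{i,i}=z_i^2$ for all $i$. Since a rank-1 Nash matrix of the form in (\ref{Rank 1 form}) is always feasible (see the remark after Proposition 2.1) and achieves this lower bound of $2$, the optimal value is exactly $2$, and every optimum must satisfy $\M_{i,i}=z_i^2$ for all $i$. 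Applying Proposition~\ref{diagonal sufficiency} then forces $\M'$ to be rank-1.

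For the objective $f_2(\M) \defeq \Tr(\M) - x^T x - y^T y = \sum_{i=1}^{m+n}(\M_{i,i}-z_i^2)$, the same diagonal bound gives $f_2(\M)\ge 0$, with equality if and only if $\M_{i,i}=z_i^2$ for every $i$. The rank-1 Nash feasibility argument again shows $0$ is attained, so every optimum satisfies the diagonal equality and is therefore rank-1 by Proposition~\ref{diagonal sufficiency}.

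The only genuinely nontrivial point is verifying the pointwise diagonal inequality $\M_{i,i}\ge z_i^2$; this is handled cleanly by Schur complements on $\M'$. The rest is a matching-of-bounds argument using the already-established Proposition~\ref{diagonal sufficiency} to upgrade equality of diagonals into a rank-1 conclusion. I do not anticipate any real obstacle, but care should be taken to note that feasibility of a rank-1 solution (guaranteed by Nash's existence theorem) is what ensures the lower bounds are actually attained, so that optima exist and must inherit the equality case.
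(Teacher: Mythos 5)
Your proposal is correct and follows essentially the same route as the paper: Schur complementation gives $\M \succeq zz^T$, hence $\M_{i,i}\ge z_i^2$, the unity constraints give the lower bounds $2$ and $0$ respectively, and the equality case is upgraded to rank-1 via Proposition~\ref{diagonal sufficiency}. Your explicit remark that the bounds are attained by a rank-1 Nash solution is a small but welcome tightening of a point the paper leaves implicit.
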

\begin{proof} We show that each of these objectives has a specific lower bound which is achieved if and only if the matrix is rank-1.\\
	Observe that since $\M \succeq \xyvec\xyvec^T$, we have $\sqrt{X_{i,i}}\ge x_i$ and $\sqrt{Y_{i,i}} \ge y_i$, and hence $$\sum_{i=1}^{m+n} \sqrt{\M_{i,i}} \ge \sum_{i=1}^{m} x_i + \sum_{i=1}^n y_i= 2.$$
	Further note that $$\Tr(\M) - \xyvec^T\xyvec \ge \xyvec^T\xyvec - \xyvec^T\xyvec = 0.$$
	
	We can see that the lower bounds are achieved if and only if $X_{i,i} = x_i^2$ and $Y_{i,i} = y_i^2$ for all $i$, which by Proposition~\ref{diagonal sufficiency} happens if and only if $\M$ is rank-1.\end{proof}

We refer to our two objective functions in Proposition~\ref{nonconvex objectives} as the ``\emph{square root objective}'' and the ``\emph{diagonal gap objective}'' respectively. While these are both nonconvex, we will attempt to iteratively minimize them by linearizing them through a first order Taylor expansion. For example, at iteration $k$ of the algorithm, $$\sum_{i=1}^{m+n} \sqrt{\M_{i,i}^{(k)}} \simeq \sum_{i=1}^{m+n} \sqrt{\M_{i,i}^{(k-1)}} + \frac{1}{2\sqrt{\M_{i,i}^{(k-1)}}}(\M_{i,i}^{(k)} - \M_{i,i}^{(k-1)}).$$ Note that for the purposes of minimization, this reduces to minimizing $\sum_{i=1}^{m+n} \frac{1}{\sqrt{\M_{i,i}^{{(k-1)}}}}\M_{i,i}^{(k)}$.

In similar fashion, for the second objective function, at iteration $k$ we can make the approximation $$\Tr(\M)-\xyvec^{(k)T}\xyvec^{(k)} \simeq \Tr(\M)-\xyvec^{(k-1)T}\xyvec^{(k-1)T} - 2\xyvec^{(k-1)T}(\xyvec^{(k)}-\xyvec^{(k-1)}).$$ Once again, for the purposes of minimization this reduces to minimizing $\Tr(\M)-2\xyvec^{(k-1)T}\xyvec^{(k)}$. This approach then leads to the following two algorithms.\footnote{An algorithm similar to Algorithm \ref{DG Algorithm} is used in \cite{ibaraki2001rank}.}

\begin{algorithm}[H]
	\caption{Square Root Minimization Algorithm}\label{SR Algorithm}
	\label{SRM}
	\begin{algorithmic}[1]
		\State Let $x^{(0)} = 1_m, y^{(0)} = 1_n, k = 1$.
		\While {!convergence}
		\State Solve~\ref{SDP2} with $\sum_{i=1}^m \frac{1}{\sqrt{x_i^{(k-1)}}}X_{i,i} + \sum_{i=1}^n \frac{1}{\sqrt{y_i^{(k-1)}}}Y_{i,i}$ as the objective, and let \jeffr{$\M^*$} be an optimal solution.
		\State Let $x^{(k)} = diag(X^*), y^{(k)}=diag(Y^*)$.
		\State Let $k = k+1$.
		\EndWhile
	\end{algorithmic}
\end{algorithm}

\begin{algorithm}[H]
	\caption{Diagonal Gap Minimization Algorithm}\label{DG Algorithm}
	\label{DDM}
	\begin{algorithmic}[1]
		\State Let $x^{(0)}=0_m, y^{(0)}=0_n, k=1$.
		\While {!convergence}
		\State Solve~\ref{SDP2} with $\Tr(X) + \Tr(Y) - 2\xyvec^{(k-1)T}\xyvec^{(k)}$ as the objective, and let \jeffr{$\M^*$} be an optimal solution.
		\State \jeffg{Let $x^{(k)} = P^*1_n, y^{(k)}=P^{*T}1_m$.}
		\State Let $k = k+1$.
		\EndWhile
	\end{algorithmic}
\end{algorithm}

\begin{remark}
	Note that the first iteration of both algorithms uses the nuclear norm (i.e. trace) of $\M$ as the objective.
\end{remark}

The square root algorithm has the following property.

\begin{theorem}
	Let $\M^{(1)}, \M^{(2)}, \ldots$ be the sequence of optimal matrices obtained from the square root algorithm. Then the sequence
	\begin{equation}\label{sum sqrt}
	\{\sum_{i=1}^{m+n} \sqrt{\M_{i,i}^{(k)}}\}
	\end{equation}
	is nonincreasing and is lower bounded by two. If it reaches two at some iteration $t$, then the matrix $\M^{(t)}$ is rank-1.
\end{theorem}
\begin{proof}
	Observe that for any $k>1$,
	$$\sum_{i=1}^{m+n} \sqrt{\M_{i,i}^{(k)}} \le \frac{1}{2}\sum_{i=1}^{m+n} (\frac{\M_{i,i}^{(k)}}{\sqrt{\M_{i,i}^{(k-1)}}}+\sqrt{\M_{i,i}^{(k-1)}}) \le \frac{1}{2}\sum_{i=1}^{m+n} (\frac{\M_{i,i}^{(k-1)}}{\sqrt{\M_{i,i}^{(k-1)}}}+\sqrt{\M_{i,i}^{(k-1)}}) = \sum_{i=1}^{m+n} \sqrt{\M_{i,i}^{(k-1)}},$$
	where the first inequality follows from the arithmetic-mean-geometric-mean inequality, and the second follows from that $\M_{i,i}^{(k)}$ is chosen to minimize $\sum_{i=1}^{m+n} \frac{\M_{i,i}^{(k)}}{\sqrt{\M_{i,i}^{(k-1)}}}$ and hence achieves a no larger value than the feasible solution $\M^{(k-1)}$. This shows that the sequence is nonincreasing.
	
	The proof of Proposition~\ref{nonconvex objectives} already shows that the sequence is lower bounded by two, and Proposition~\ref{nonconvex objectives} itself shows that reaching two is sufficient to have the matrix be rank-1.
\end{proof}

The diagonal gap algorithm has the following property.
\begin{theorem} 
	Let $\M^{(1)}, \M^{(2)}, \ldots$ be the sequence of optimal matrices obtained from the diagonal gap algorithm. Then the sequence \begin{equation}\{\Tr(\M^{(k)})- \xyvec^{(k)T}\xyvec^{(k)}\}\end{equation}
	is nonincreasing and is lower bounded by zero. If it reaches zero at some iteration $t$, then the matrix $\M^{(t)}$ is rank-1.
\end{theorem}
\begin{proof} Observe that
	\begin{align*}
	\Tr(\M^{(k)}) - \xyvec^{(k)T}\xyvec^{(k)} &\le \Tr(\M^{(k)})-\xyvec^{(k)T}\xyvec^{(k)} + (\xyvec^{(k)}- \xyvec^{(k-1)})^T(\xyvec^{(k)}- \xyvec^{(k-1)})\\
	&=\Tr(\M^{(k)})-2\xyvec^{(k)T}\xyvec^{(k-1)}+\xyvec^{(k-1)T}\xyvec^{(k-1)}\\
	&\le \Tr(\M^{(k-1)})-2\xyvec^{(k-1)T}\xyvec^{(k-1)}+\xyvec^{(k-1)T}\xyvec^{(k-1)}\\
	&=\Tr(\M^{(k-1)})-\xyvec^{(k-1)T}\xyvec^{(k-1)},
	\end{align*}
	where the second inequality follows from that \jeffr{$\M^{(k)}$} is chosen to minimize $$\Tr(\M^{(k-1)})-2\xyvec^{(k-1)T}\xyvec^{(k-1)}$$ and hence achieves a no larger value than the feasible solution \jeffr{$\M^{(k-1)}$}. This shows that the sequence is nonincreasing.
	
	The proof of Proposition~\ref{nonconvex objectives} already shows that the sequence is lower bounded by zero, and Proposition~\ref{nonconvex objectives} itself shows that reaching zero is sufficient to have the matrix be rank-1.\end{proof}
	
	\jeffg{We also invite the reader to also see Theorem \ref{Thm: Trace minus 2xxt} in the next section which relates the objective value of the diagonal gap minimization algorithm and the quality of approximate Nash equilibria that the algorithm produces.}

\subsection{Numerical Experiments}
We tested Algorithms~\ref{SR Algorithm} and~\ref{DG Algorithm} on games coming from 100 randomly generated payoff matrices with entries bounded in $[0,1]$ of varying sizes. Below is a table of statistics for $20 \times 20$ matrices; the data for the rest of the sizes can be found in Appendix~\ref{Appendex Statistics}.\footnote{The code that produced these results is publicly available at \url{aaa.princeton.edu/software}. The function \texttt{nash.m} computes an approximate Nash equilibrium using one of our two algorithms as specified by the user.} We can see that our algorithms return approximate Nash equilibria with fairly low $\epsilon$ (recall the definition from Section~\ref{Nash as QP}). We ran 20 iterations of each algorithm on each game. {\aaa Using the SDP solver of MOSEK~\cite{mosek}, each iteration takes on average under 4 seconds to solve on a standard personal machine with a 3.4 GHz processor and 16 GB of memory.}

\begin{table}[H]
	\caption{Statistics on $\epsilon$ for $20\times 20$ games after 20 iterations.\label{5x5}}
	{\begin{tabular}{|c|c|c|c|c|c|}\hline
			Algorithm & Max & Mean & Median & StDev\\\hline
			Square Root&	0.0198&	0.0046&	0.0039&	0.0034	\\\hline
			Diagonal Gap&	0.0159&	0.0032&	0.0024&	0.0032	\\\hline
	\end{tabular}}
	\centering
\end{table}

The histograms below show the effect of increasing the number of iterations on lowering $\epsilon$ on $20 \times 20$ games. For both algorithms, there was a clear improvement of the $\epsilon$ by increasing the number of iterations.

\begin{figure}[H]
	\includegraphics[height=.3\textheight,keepaspectratio]{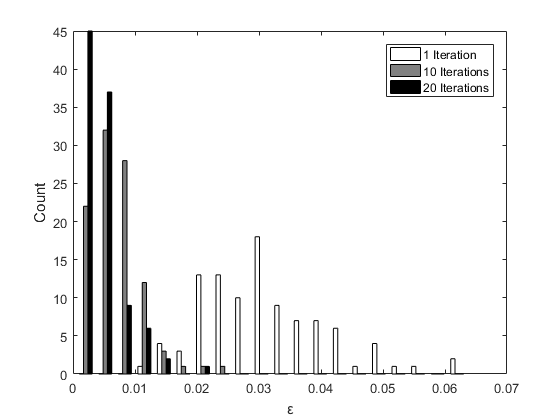}
	\includegraphics[height=.3\textheight,keepaspectratio]{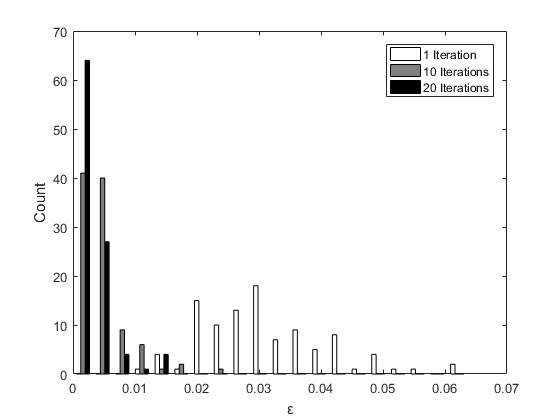}
	\caption{Distribution of $\epsilon$ over numbers of iterations for the square root algorithm (left) and the diagonal gap algorithm (right).\label{Epsilon Iterations}}
\end{figure}

\section{Bounds on $\epsilon$ for General Games}\label{Bounds}

Since the problem of computing a Nash equilibrium to an arbitrary bimatrix game is PPAD-complete, it is unlikely that one can find rank-1 solutions to this SDP in polynomial time. In Section~\ref{Algorithms}, we design\jeffg{ed} objective functions (such as variations of the nuclear norm) that empirically do very well in finding low-rank solutions to~\ref{SDP2}. Nevertheless, it is of interest to know if the solution returned by~\ref{SDP2} is not rank-1, whether one can recover an $\epsilon$-Nash equilibrium from it and have a guarantee on $\epsilon$. Our goal in this section is to study this question.

\jeffg{\emph{Notational Remark:}} Recall our notation for the matri\jeffr{x}

$$\mathcal{M} \mathrel{\mathop:}= \left[\begin{matrix} X & P\\ Z & Y\end{matrix}\right].$$
Throughout this section, any matrices $X, Z, P=Z^T$ and $Y$ are assumed to be taken from a feasible solution to~\ref{SDP2}. \jeffo{Furthermore, $x$ and $y$ will be $P1_n$ and $P^T1_m$ respectively.}

The ultimate results of this section are \jeffg{the theorems in Sections \ref{SSec: Bounds on e} and \ref{SSec: Rank-2 case}}. To work towards them, we need a number of preliminary lemmas which we present in Section~\ref{Useful Lemmas}.

\subsection{Lemmas Towards Bounds on $\epsilon$}\label{Useful Lemmas}

We first observe the following connection between the approximate payoffs $\Tr(AZ)$ and $\Tr(BZ)$, and $\epsilon(x,y)$, as defined in Section~\ref{Nash as QP}.
\begin{lem}\label{Bound for e} \jeffo{Consider any feasible solution to~\ref{SDP2}.} Then
	$$\epsilon(x,y) \le \max\{\Tr(AZ)-x^TAy, \Tr(BZ) - x^TBy\}.$$\end{lem}
\begin{proof} \jeffg{Recall from the argument at the beginning of Section \ref{SSec: Simplification} that constraints (\ref{CE_A}) and (\ref{CE_B}) imply $\Tr(AZ) \ge e_i^TAy$ and $\Tr(BZ) \ge x^TBe_i$ for all $i$.} Hence, we have $\epsilon_A \le \Tr(AZ)-x^TAy$ and $\epsilon_B \le \Tr(BZ) - x^TBy$.\end{proof}

We thus are interested in the difference of the two matrices $P=Z^T$ and $xy^T$. These two matrices can be interpreted as two different probability distributions over the strategy outcomes. The matrix $P$ is the probability distribution from the SDP which generates the approximate payoffs $\Tr(AZ)$ and $\Tr(BZ)$, while $xy^T$ is the product distribution that would have resulted if the matrix had been rank-1. We will see that the difference of these distributions is key in studying the $\epsilon$ which results from \ref{SDP2}. Hence, we first take steps to represent this difference.

\begin{lem}\label{Partition Lemma} \jeffr{Consider any feasible matrix $\M$ to~\ref{SDP2} with an eigendecomposition}
	\begin{equation}
	\begin{aligned}
	\M = \sum_{i=1}^k \lambda_i v_iv_i^T =\mathrel{\mathop:} \sum_{i = 1}^k \lambda_i \bmat a_i \\ b_i\emat \bmat a_i \\ b_i\emat^T,
	\end{aligned}
	\end{equation}
	so that the eigenvectors $v_i\in \mathbb{R}^{m+n}$ are partitioned into vectors $a_i \in \mathbb{R}^m$ and $b_i\in \mathbb{R}^n$. Then for all $i,\sum_{j=1}^m (a_i)_j = \sum_{j=1}^n (b_i)_j$.\end{lem}
\begin{proof}
	We know from \jeffo{(\ref{SDP2 Distribution}), (\ref{SDP2 Row x}), and (\ref{SDP2 Row y})} that
	\begin{align}
	\sum_{i=1}^k \lambda_i 1_m^T a_ia_i^T 1_m \overset{(\ref{SDP2 Distribution}),(\ref{SDP2 Row x})}{=} 1, \label{pl-1}\\
	\sum_{i=1}^k \lambda_i 1_m^T a_ib_i^T 1_n \overset{(\ref{SDP2 Distribution})}{=} 1,\label{pl-2}\\
	\sum_{i=1}^k \lambda_i 1_n^T b_ia_i^T 1_m \overset{(\ref{SDP2 Distribution})}{=} 1,\label{pl-3}\\
	\sum_{i=1}^k \lambda_i 1_n^T b_ib_i^T 1_n \overset{(\ref{SDP2 Distribution}),(\ref{SDP2 Row y})}{=} 1.\label{pl-4}
	\end{align}
	
	Then by subtracting terms we have
	\begin{align}
	(\ref{pl-1})-(\ref{pl-2})=\sum_{i=1}^k \lambda_i 1_m^Ta_i (a_i^T1_m - b_i^T1_n) = 0, \label{pl-5}\\
	(\ref{pl-3})-(\ref{pl-4})=\sum_{i=1}^k \lambda_i 1_n^Tb_i (a_i^T1_m - b_i^T1_n) = 0. \label{pl-6}
	\end{align}
	
	By subtracting again these imply
	\begin{equation}
	(\ref{pl-5})-(\ref{pl-6})=\sum_{i=1}^k \lambda_i (1_m^T a_i - 1_n^Tb_i)^2 = 0.\label{pl-7}
	\end{equation}
	As all $\lambda_{i}$ are nonnegative due to positive semidefiniteness of $\M$, the only way for this equality to hold is to have $1_m^Ta_i = 1_n^Tb_i, \forall i$. This is equivalent to the statement of the claim.
\end{proof}

From Lemma~\ref{Partition Lemma}, we can let $s_i \defeq \sum_{j=1}^m (a_i)_j = \sum_{j=1}^n (b_i)_j$, and furthermore we assume without loss of generality that each $s_i$ is nonnegative. \jeffo{Note that from the definition of $x$} we have
\begin{equation} x_i = \sum_{j = 1}^m P_{ij} = \sum_{l = 1}^k \sum_{j = 1}^m \lambda_l (a_l)_i (b_l)_j = \sum_{j = 1}^k \lambda_j s_j (a_l)_i.
\end{equation}
Hence,
\begin{equation}\label{Row constraint x}x = \sum_{i =1}^k \lambda_i s_i a_i.\end{equation}
Similarly,
\begin{equation}\label{Row constraint y}
y = \sum_{i=1}^k \lambda_i s_i b_i.
\end{equation}
Finally note from the distribution constraint (\ref{Eq: Distribution Inequalities}) that this implies \begin{equation}\label{dist const}
\sum_{i=1}^k \lambda_is_i^2 = 1.\end{equation}

\begin{lem}\label{Representation Lemma}
	Let $$\M = \sum_{i=1}^k \lambda_i \bmat a_i\\b_i \emat \bmat a_i\\b_i \emat^T,$$ be a feasible solution to~\ref{SDP2}, such that the eigenvectors of $\M$ are partitioned into $a_i$ and $b_i$ with $\sum_{j=1}^m (a_i)_j = \sum_{j=1}^n (b_i)_j=s_i, \forall i$. Then $$P-xy^T= \sum_{i = 1}^k \sum_{j>i}^k \lambda_i\lambda_j (s_ja_i - s_ia_j)(s_jb_i - s_ib_j)^T.$$
\end{lem}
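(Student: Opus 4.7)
My plan is to expand both $P$ and $xy^T$ in terms of the eigenbasis, use the identity $\sum_i \lambda_i s_i^2 = 1$ from (\ref{dist const}) to bring $P$ into a double-sum form compatible with $xy^T$, and then symmetrize the resulting off-diagonal terms to obtain the factored expression.

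First I would read off from the block structure of $\M$ the identity
$$P = \sum_{i=1}^k \lambda_i a_i b_i^T,$$
and combine (\ref{Row constraint x}) and (\ref{Row constraint y}) to write
$$xy^T = \sum_{i=1}^k \sum_{j=1}^k \lambda_i \lambda_j s_i s_j \, a_i b_j^T.$$
The next step is to use (\ref{dist const}) to insert a factor of $1 = \sum_j \lambda_j s_j^2$ into the expression for $P$, giving
$$P = \sum_{i=1}^k \sum_{j=1}^k \lambda_i \lambda_j s_j^2 \, a_i b_i^T.$$
Subtracting, the diagonal contributions ($i=j$) cancel because $\lambda_i^2 s_i^2 a_i b_i^T - \lambda_i^2 s_i^2 a_i b_i^T = 0$, leaving only off-diagonal $(i,j)$ terms.

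The remaining step is a symmetrization: split the double sum over $i \neq j$ into the parts with $i<j$ and $i>j$, and relabel the latter by swapping the dummy indices. This produces, for each pair $i < j$, the four terms
$$\lambda_i \lambda_j \bigl[s_j^2 a_i b_i^T + s_i^2 a_j b_j^T - s_i s_j a_i b_j^T - s_i s_j a_j b_i^T\bigr],$$
which factor exactly as $\lambda_i \lambda_j (s_j a_i - s_i a_j)(s_j b_i - s_i b_j)^T$. Matching this with the claimed expression completes the proof.

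The only nontrivial move is recognizing that the distribution constraint $\sum_i \lambda_i s_i^2 = 1$ is precisely what is needed to put $P$ and $xy^T$ on the same double-sum footing so that the off-diagonal rank-one outer products can be collected into the stated factored form; once that substitution is in place, the rest is bookkeeping.
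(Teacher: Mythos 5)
Your proposal is correct and follows essentially the same route as the paper: both expand $P$ and $xy^T$ in the eigenbasis, insert the factor $1=\sum_j\lambda_j s_j^2$ from the distribution constraint to put the two expressions on common footing, and then pair the $(i,j)$ and $(j,i)$ terms to obtain the factored rank-one form. The only cosmetic difference is that the paper factors out $\lambda_i a_i$ before applying the substitution, whereas you apply it directly to $P$; the algebra is identical.
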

\begin{proof}
	Using equations (\ref{Row constraint x}) and (\ref{Row constraint y}) we can write
	\begin{equation*}
	\begin{aligned}
	P - xy^T &= \sum_{i=1}^k \lambda_i a_ib_i^T - (\sum_{i =1}^k \lambda_i s_i a_i)(\sum_{j=1}^k \lambda_j s_j b_j)^T\\
	&= \sum_{i=1}^k \lambda_ia_i(b_i - s_i\sum_{j=1}^k \lambda_js_jb_j)^T\\
	&\overset{(\ref{dist const})}{=}\sum_{i=1}^k \lambda_ia_i(\sum_{j=1}^k \lambda_js_j^2b_i - s_i\sum_{j=1}^k \lambda_js_jb_j)^T\\
	&=\sum_{i=1}^k\sum_{j=1}^k \lambda_i\lambda_j a_is_j(s_jb_i-s_ib_j)^T\\
	&=\sum_{i = 1}^k \sum_{j > i}^k \lambda_i\lambda_j (s_ja_i - s_ia_j)(s_jb_i - s_ib_j)^T,
	\end{aligned}
	\end{equation*}
	\jeff{where the last line follows from observing that terms where $i$ and $j$ are switched can be combined.}
\end{proof}

We can relate $\epsilon$ and $P-xy^T$ with the following lemma.
\begin{lem}\label{L1 norm/2 bound}
	\jeffg{Let the matrix $P$ and the vectors $x \defeq P1_n$ and $y \defeq P^T1_m$ come from any feasible solution to $\ref{SDP2}$.} Then $$\epsilon \le \frac{\|P-xy^T\|_1}{2},$$ where $\|\cdot\|_1$ here denotes the entrywise L-1 norm, i.e., the sum of the absolute values of the entries of the matrix.
\end{lem}
\begin{proof}
	Let $D \mathrel{\mathop:}= P - xy^T$. From Lemma~\ref{Bound for e}, $$\epsilon_A \le \Tr(AZ) - x^TAy = \Tr(A(Z-yx^T)).$$ If we then hold $D$ fixed and restrict that $A$ has entries bounded in [0,1], the quantity $\Tr(AD^T)$ is maximized when $$A_{i,j} = \begin{cases} 1 & D_{i,j} \ge 0 \\ 0 & D_{i,j} < 0\end{cases}.$$ The resulting quantity $\Tr(AD^T)$ will then be the sum of all nonnegative elements of $D$. Since the sum of all elements in $D$ is zero, this quantity will be equal to $\frac{1}{2}\|D\|_1$.\\
	The proof for $\epsilon_B$ is identical, and the result follows from that $\epsilon$ is the maximum of $\epsilon_A$ and $\epsilon_B$.
\end{proof}

\subsection{Bounds on $\epsilon$}\label{SSec: Bounds on e}

We provide a number of bounds on $\epsilon(x,y)$ \jeffr{for $x \defeq P1_n$ and $y \defeq P^T1_m$ coming from any feasible solution to \ref{SDP2}}. \jeffg{Our first two theorems roughly state that} solutions which are ``close'' to rank-1 provide small $\epsilon$.

\begin{theorem}\label{nksl}
	\jeffo{Consider any feasible solution $\M$ to~\ref{SDP2}. Suppose $\M$ is rank-$k$ and its eigenvalues are $\lambda_1 \ge \lambda_2 \ge ... \ge \lambda_k > 0$. Then $x$ and $y$ constitute an $\epsilon$-NE to the game $(A,B)$ with $\epsilon \le \frac{m+n}{2}\sum_{i=2}^k \lambda_i.$}
\end{theorem}
\begin{proof}
	By the Perron Frobenius theorem (see e.g.~\cite[Chapter 8.3]{meyer2000matrix}), the eigenvector corresponding to $\lambda_1$ can be assumed to be nonnegative, and hence \begin{equation}\label{s1 value}
	s_1 = \|a_1\|_1 = \|b_1\|_1.
	\end{equation}
	We further note that for all $i$, since $\bmat a_i\\b_i\emat$ is a vector of length $m+n$ with 2-norm equal to 1, we must have
	\begin{equation}\label{L1 norm sum}\left\|\bmat a_i\\b_i\emat\right\|_1 \le \sqrt{m+n}.\end{equation}
	Since $s_i$ is the sum of the elements of $a_i$ and $b_i$, we know that
	\begin{equation}\label{si bound} s_i \le \min\{\|a_i\|_1, \|b_i\|_1\} \le \frac{\sqrt{m+n}}{2}.\end{equation}
	This then gives us
	\begin{equation}\label{L1 norm prod}s_i^2 \le \|a_i\|_1\|b_i\|_1 \le \frac{m+n}{4},\end{equation}
	\jeff{with the first inequality following from (\ref{si bound}) and the second from (\ref{L1 norm sum}).}
	Finally note that a consequence of the nonnegativity of $\|\cdot\|_1$ and (\ref{L1 norm sum}) is that for all $i, j$,
	\begin{equation}\label{l1 cross product}
	\|a_i\|_1\|b_j\|_1 + \|b_i\|_1\|a_j\|_1 \le \jeff{(\|a_i\|_1+\|b_i\|_1)(\|a_j\|_1+\|b_j\|_1)=} \left\|\bmat a_i\\b_i\emat\right\|_1\left\|\bmat a_j\\b_j\emat\right\|_1 \overset{(\ref{L1 norm sum})}{\le} m+n.
	\end{equation}
	Now we let $D \mathrel{\mathop:}= P - xy^T$ and upper bound $\frac{1}{2}\|D\|_1$ using Lemma~\ref{Representation Lemma}.
	\begin{align}
	\frac{1}{2}\|D\|_1 &= \frac{1}{2}\|\sum_{i = 1}^k \sum_{j>i}^k \lambda_i\lambda_j (s_ja_i - s_ia_j)(s_jb_i - s_ib_j)^T\|_1\nonumber \\
	&\le \frac{1}{2}\sum_{i = 1}^k \sum_{j>i}^k \|\lambda_i\lambda_j (s_ja_i - s_ia_j)(s_jb_i - s_ib_j)^T\|_1\nonumber  \\
	& \le \frac{1}{2}\sum_{i = 1}^k \sum_{j > i}^k \lambda_i\lambda_j \|s_ja_i - s_ia_j\|_1\|s_jb_i - s_ib_j\|_1\nonumber \\
	& \le \frac{1}{2}\sum_{i = 1}^k \sum_{j > i}^k \lambda_i\lambda_j (s_j\|a_i\|_1 + s_i\|a_j\|_1)(s_j\|b_i\|_1 + s_i\|b_j\|_1) \label{Eq: Hell} \\
	& \overset{(\ref{s1 value}),(\ref{L1 norm prod})}{\le} \frac{1}{2}\sum_{j=2}^k\lambda_1s_1^2\lambda_j(s_j+\|a_j\|_1)(s_j+\|b_j\|_1)\nonumber  \\
	&+\frac{1}{2}\sum_{i = 2}^k \sum_{j > i}^k \lambda_i\lambda_j (s_j^2\frac{m+n}{4} + s_i^2\frac{m+n}{4} + s_is_j\|a_i\|_1\|b_j\|_1+ s_is_j\|a_j\|_1\|b_i\|_1)\nonumber \\
	&\overset{(\ref{L1 norm sum}),(\ref{l1 cross product}),(\ref{si bound})}{\le} \frac{m+n}{2}\lambda_1s_1^2 \sum_{i=2}^k \lambda_i\nonumber \\
	&+ \frac{1}{2}\sum_{i=2}^k \sum_{j > i}^k \lambda_i\lambda_j\frac{m+n}{4}(s_i^2 + s_j^2) + \lambda_i\lambda_js_is_j (m+n)\nonumber \\
	&\overset{\text{AMGM\footnotemark}}{\le} \frac{m+n}{2}\lambda_1s_1^2 \sum_{i=2}^k \lambda_i + \frac{m+n}{2}\sum_{i=2}^k \sum_{j > i}^k \lambda_i\lambda_j (\frac{s_i^2 + s_j^2}{4} + \frac{s_i^2+s_j^2}{2})\nonumber \\
	&= \frac{m+n}{2}\lambda_1s_1^2 \sum_{i=2}^k \lambda_i + \frac{3(m+n)}{8}\sum_{i=2}^k \sum_{j > i}^k \lambda_i\lambda_j (s_i^2+s_j^2)\nonumber \\
	&= \frac{m+n}{2}\lambda_1s_1^2 \sum_{i=2}^k \lambda_i + \frac{3(m+n)}{8}(\sum_{i=2}^k\lambda_is_i^2 \sum_{j > i}^k \lambda_j + \sum_{i=2}^k \lambda_i \sum_{j > i}^k \lambda_js_j^2)\nonumber \\
	&= \frac{m+n}{2}\lambda_1s_1^2 \sum_{i=2}^k \lambda_i + \frac{3(m+n)}{8}(\sum_{j = 2}^k \lambda_j \sum_{2\le i<j}^k\lambda_is_i^2 + \sum_{i=2}^k \lambda_i \sum_{j > i}^k \lambda_js_j^2)\nonumber \\
	&\le \frac{m+n}{2}\lambda_1s_1^2 \sum_{i=2}^k \lambda_i + \frac{3(m+n)}{8}(\sum_{j =2}^k \lambda_js_j^2)\sum_{i=2}^k \lambda_i\nonumber \\
	&\overset{(\ref{dist const})}{=}
	\frac{m+n}{2}\lambda_1s_1^2 \sum_{i=2}^k \lambda_i + \frac{3(m+n)}{8}(1-\lambda_1s_1^2)\sum_{i=2}^k \lambda_i\nonumber \\
	&= \frac{m+n}{8}(3+\lambda_1s_1^2)\sum_{i=2}^k \lambda_i\nonumber \\
	&\overset{(\ref{dist const})}{\le} \frac{m+n}{2} \sum_{i=2}^k \lambda_i.\nonumber 
	\end{align}
\end{proof}
\footnotetext{AMGM is used to denote the arithmetic-mean-geometric-mean inequality.}

\jeffg{
	The following theorem quantifies how making the objective of the diagonal gap algorithm from Section \ref{Algorithms} small makes $\epsilon$ small. The proof is similar to the proof of Theorem~\ref{nksl}.
	\begin{theorem}\label{Thm: Trace minus 2xxt}
		Let $\M$ be a feasible solution to~\ref{SDP2}. Then, $x$ and $y$ constitute an $\epsilon$-NE to the game $(A,B)$ with $\epsilon \le \frac{3(m+n)}{8}(\Tr(\M)-x^Tx-y^Ty).$
	\end{theorem}
	\begin{proof}
		Let $\M$ be rank-$k$ with eigenvalues $\lambda_1 \ge \lambda_2 \ge \ldots \ge \lambda_k > 0$ and eigenvectors $v_1, \ldots, v_k$ partitioned as in Lemma \ref{Partition Lemma} so that $v_i = \bmat a_i\\ b_i\emat$ with $\sum_{j=1}^m (a_i)_j = \sum_{j=1}^n (b_i)_j$ for $i = 1, \ldots, k$. Let $s_i \defeq \sum_{j=1}^m (a_i)_j$. Then we have $\Tr(\M) = \sum_{i=1}^k \lambda_i$, and
		\begin{equation}
		\label{Eq: xynorm}
		x^Tx + y^Ty \overset{(\ref{Row constraint x}),(\ref{Row constraint y})}{=} (\sum_{i=1}^k \lambda_i s_i v_i)^T(\sum_{i=1}^k \lambda_i s_i v_i) = \sum_{i=1}^k \lambda_i^2s_i^2.\end{equation}
		We now get the following chain of inequalities (the first one follows from Lemma~\ref{L1 norm/2 bound} and inequality~(\ref{Eq: Hell})):
		\begin{align*}
		\epsilon &\le \frac{1}{2}\sum_{i = 1}^k \sum_{j > i}^k \lambda_i\lambda_j (s_j\|a_i\|_1 + s_i\|a_j\|_1)(s_j\|b_i\|_1 + s_i\|b_j\|_1)\\
		& \overset{(\ref{s1 value}),(\ref{L1 norm prod})}{\le} \frac{1}{2}\sum_{i = 1}^k \sum_{j > i}^k \lambda_i\lambda_j (s_j^2\frac{m+n}{4} + s_i^2\frac{m+n}{4} + s_is_j\|a_i\|_1\|b_j\|_1+ s_is_j\|a_j\|_1\|b_i\|_1)\\
		&\overset{(\ref{l1 cross product})}{\le} \frac{1}{2}\sum_{i=1}^k \sum_{j > i}^k \lambda_i\lambda_j\frac{m+n}{4}(s_i^2 + s_j^2) + \lambda_i\lambda_js_is_j (m+n)\\
		&\overset{AMGM}{\le} \frac{m+n}{2}\sum_{i=1}^k \sum_{j > i}^k \lambda_i\lambda_j (\frac{s_i^2 + s_j^2}{4} + \frac{s_i^2+s_j^2}{2})\\
		&= \frac{3(m+n)}{8}\sum_{i=1}^k \sum_{j > i}^k \lambda_i\lambda_j (s_i^2+s_j^2)\\
		&= \frac{3(m+n)}{8}(\sum_{i=1}^k\lambda_is_i^2 \sum_{j > i}^k \lambda_j + \sum_{i=1}^k \lambda_i \sum_{j > i}^k \lambda_js_j^2)\\
		&= \frac{3(m+n)}{8}(\sum_{j = 1}^k \lambda_j \sum_{1\le i<j}^k\lambda_is_i^2 + \sum_{i=1}^k \lambda_i \sum_{j > i}^k \lambda_js_j^2)\\
		&= \frac{3(m+n)}{8}(\sum_{i = 1}^k \lambda_i \sum_{j \ne i}\lambda_js_j^2)\\
		&\overset{(\ref{dist const})}{=} \frac{3(m+n)}{8}(\sum_{i = 1}^k \lambda_i (1-\lambda_is_i^2))\\
		&= \frac{3(m+n)}{8}(\sum_{i=1}^k \lambda_i - \sum_{i=1}^k \lambda_i^2s_i^2) \overset{(\ref{Eq: xynorm})}{=} \frac{3(m+n)}{8}(\Tr(\M)-x^Tx-y^Ty).
		\end{align*}
	\end{proof}
}

\jeffo{We now give a bound on $\epsilon$ which is dependent on the nonnegative rank of the matrix returned by \ref{SDP2}. Our analysis will also be useful for the next subsection. To begin, we first recall the definition of the nonnegative rank.

\begin{defn}\label{Defn: Nonnegative Rank}
	The \emph{nonnegative rank} of a (nonnegative) $m \times n$ matrix $M$ is the smallest $k$ for which there exist a nonnegative $m \times k$ matrix $U$ and a nonnegative $n \times k$ matrix $V$ such that $M = UV^T$. Such a decomposition is called a \emph{nonnegative matrix factorization} of $M$.
\end{defn}

\begin{theorem}\label{Thm: 1-1/k}
	Consider the matrix $P$ from any feasible solution to \ref{SDP2}. Suppose its nonnegative rank is $k$. Then $x \defeq P1_n$ and $y \defeq P^T1_m$ constitute an $\epsilon$-NE to the game $(A,B)$ with $\epsilon \le 1-\frac{1}{k}$.
\end{theorem}
\begin{proof}


Since $P$ has nonnegative rank $k$ and its entries sum up to 1, we can write $P = \sum_{i=1}^k \sigma_ia_ib_i^T$, where $a_i \in \triangle_m, b_i \in \triangle_n$, and $\sum_{i=1}^k \sigma_i = 1$. From Lemma~\ref{L1 norm/2 bound} and inequality~(\ref{Eq: Hell}) (keeping in mind that $s_i = 1,\ \forall\ i$) we have
\begin{align*}
\epsilon &\le \frac{1}{2} \sum_{i=1}^k \sum_{j > i}^k \sigma_i\sigma_j (\|a_i\|_1 + \|a_j\|_1)(\|b_i\|_1 + \|b_j\|_1)\\
& \le 2 \sum_{i=1}^k \sum_{j > i}^k \sigma_i\sigma_j\\
&= 2(\frac{1}{2}(\sum_{i=1}^k \sigma_i \sum_{j=1}^k \sigma_j - \sum_{i=1}^k \sigma_i^2))\\
&= 1 - \sum_{i=1}^k \sigma_i^2\\
&\le 1 - \frac{1}{k},
\end{align*}
where the last line follows from  the fact that $\|v\|_2^2 \ge \frac{1}{k}$ for any vector $v \in \triangle_k$.
\end{proof}}

\subsection{Bounds on $\epsilon$ in the Rank-2 Case}\label{SSec: Rank-2 case}

\jeffg{We now provide a number of bounds on $\epsilon(x,y)$ with $x \defeq P1_n$ and $y \defeq P^T1_m$ which hold for rank-2 feasible solutions $\M$ to \ref{SDP2} (note that $P$ will have rank at most 2 in this case).} This is motivated by our ability to show stronger (constant) bounds in this case, and the fact that we often recover rank-2 (or rank-1) solutions with our algorithms in Section~\ref{Algorithms}. \jeffo{Furthermore, our analysis will use the special property that a rank-2 nonnegative matrix will have nonnegative rank also equal to two, and that a nonnegative factorization of it can be computed in polynomial time (see, e.g., Section 4 of \cite{cohen1993nonnegative}). We begin with the following observation, which follows from Theorem \ref{Thm: 1-1/k} when $k = 2$.}

\begin{cor}\label{1/2e} \jeffg{If the matrix $P$ from a feasible solution to~\ref{SDP2} is rank-2, then $x$ and $y$ constitute a $\frac{1}{2}-$NE.}\end{cor}

We now show how this pair of strategies can be refined.

\begin{theorem}\label{5/11e}
\jeffg{If the matrix $P$ from a feasible solution to~\ref{SDP2} is rank-2, then either $x$ and $y$ constitute a $\frac{5}{11}$-NE, or a $\frac{5}{11}$-NE can be recovered from $P$ in polynomial time.}
\end{theorem}
\begin{proof}
	We consider 3 cases, depending on whether $\epsilon_A(x,y)$ and $\epsilon_B(x,y)$ are greater than or less than .4. If $\epsilon_A \le .4, \epsilon_B \le .4$, then $(x, y)$ is already a .4-Nash equilibrium. Now consider the case when $\epsilon_A \ge .4, \epsilon_B \ge .4$. Since $\epsilon_A \le \Tr(A(P-xy^T)^T)$ and $\epsilon_B \le \Tr(B(P-xy^T)^T)$ \jeffg{as seen in the proof of Lemma \ref{Bound for e}, we have, reusing the notation in the proof of Theorem \ref{Thm: 1-1/k},}
	\jeffo{\begin{equation*}
	\sigma_1\sigma_2(a_1-a_2)^T A (b_1-b_2) \ge .4,
	\sigma_1\sigma_2(a_1-a_2)^T B (b_1-b_2) \ge .4.
	\end{equation*}
	Since $A, a_1, a_2, b_1,$ and $b_2$ are all nonnegative and $\sigma_1\sigma_2 \le \frac{1}{4}$,
	$$a_1^TAb_1 + a_2^TAb_2 \ge (a_1-a_2)^T A (b_1-b_2) \ge 1.6,$$ and the same inequalities hold for for player B. In particular, since $A$ and $B$ have entries bounded in [0,1] and $a_1,a_2,b_1,$ and $b_2$ are simplex vectors, all the quantities $a_1^TAb_1, a_2^TAb_2, a_1^TBb_1,\ \text{and}\ a_2^TBb_2$ are at most 1, and consequently at least .6. Hence $(a_1,a_2)$ and $(a_2,b_2)$ are both .4-Nash equilibria.}
	
	Now suppose that $(x,y)$ is a .4-NE for one player (without loss of generality player A) but not for the other (without loss of generality player B). Then $\epsilon_A \le .4$, and $\epsilon_B \ge .4$. Let $y^*$ be a best response for player B to $x$, and let $p = \frac{1}{1+\epsilon_B - \epsilon_A}$. Consider the strategy profile $(\tilde{x},\tilde{y}) \defeq (x, py + (1-p)y^*)$. This can be interpreted as the outcome $(x,y)$ occurring with probability $p$, and the outcome $(x,y^*)$ happening with probability $1-p$. In the first case, player A will have $\epsilon_A(x,y) = \epsilon_A$ and player B will have $\epsilon_B(x,y) = \epsilon_B$. In the second outcome, player A will have $\epsilon_A(x,y^*)$ at most 1, while player~B will have $\epsilon_B(x,y^*) = 0$. Then under this strategy profile, both players have the same upper bound for $\epsilon$, which equals $\epsilon_B p = \frac{\epsilon_B}{1 + \epsilon_B - \epsilon_A}$. To find the worst case for this value, let $\epsilon_B = .5$ (note from Theorem~\ref{1/2e} that $\epsilon_B \le \frac{1}{2}$) and $\epsilon_A = .4$, and this will return $\epsilon = \frac{5}{11}$.
	
\end{proof}

We now show a stronger result in the case of symmetric games.

\begin{defn} A \emph{symmetric game} is a game in which the payoff matrices $A$ and $B$ satisfy $B=A^T$.\end{defn}\label{Defn: Symmetric Game}
\begin{defn} A Nash equilibrium strategy $(x,y)$ is said to be \emph{symmetric} if $x=y$.\end{defn}
\begin{theorem}[see Theorem 2 in~\cite{nash1951}]	Every symmetric bimatrix game has a symmetric Nash equilibrium.
\end{theorem}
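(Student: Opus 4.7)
The plan is to find a fixed point of the ``best response to oneself'' correspondence. The key observation enabled by $B = A^T$ (which forces $A$ to be square, so $m = n$) is that the two players' best-response conditions at a candidate strategy profile of the form $(x^*, x^*)$ reduce to the \emph{same} inequality, so it suffices to produce a single $x^* \in \triangle_m$ that is a best response against itself under payoff matrix $A$. This reduces the problem to a single-player fixed-point question on $\triangle_m$.

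Concretely, I would define the correspondence $F \colon \triangle_m \rightrightarrows \triangle_m$ by $F(x) \defeq \arg\max_{z \in \triangle_m} z^T A x$ and apply Kakutani's fixed point theorem. The simplex $\triangle_m$ is nonempty, convex, and compact. For each fixed $x$, the set $F(x)$ is the argmax of a linear function on a compact convex set, hence nonempty, convex, and compact. Upper hemicontinuity of $F$ follows from Berge's maximum theorem applied to the jointly continuous function $(x,z) \mapsto z^T A x$. Kakutani's theorem then yields some $x^* \in \triangle_m$ with $x^* \in F(x^*)$, i.e.,
\begin{equation*}
x^{*T} A x^* \;\ge\; z^T A x^* \quad \text{for all } z \in \triangle_m.
\end{equation*}

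It remains to check that $(x^*, x^*)$ is a Nash equilibrium of $(A, A^T)$. The player-A condition $x^{*T} A x^* \ge x^T A x^*$ for every $x \in \triangle_m$ is exactly the fixed-point inequality above. The player-B condition reads $x^{*T} A^T x^* \ge x^{*T} A^T y$ for every $y \in \triangle_m$; taking the transpose of both sides (each side is a scalar) rewrites it as $x^{*T} A x^* \ge y^T A x^*$ for all $y$, which is once more the fixed-point inequality. Hence $(x^*, x^*)$ is a symmetric Nash equilibrium.

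The main step is the invocation of Kakutani's theorem, and its hypotheses are standard to verify; I would expect the only place requiring care to be the joint continuity needed for the maximum theorem, which here is trivial because $(x,z) \mapsto z^T A x$ is bilinear. The genuinely game-theoretic content is the observation that under $B = A^T$ the two best-response conditions at a diagonal point collapse into a single inequality, allowing us to work on one copy of $\triangle_m$ rather than on $\triangle_m \times \triangle_m$.
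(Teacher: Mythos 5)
Your argument is correct. The paper itself does not prove this statement---it imports it directly as Theorem 2 of Nash's 1951 paper---so the only meaningful comparison is with Nash's original argument. Your route is a clean specialization to the bimatrix case: you correctly observe that $B=A^T$ forces $m=n$ and that at a diagonal profile $(x^*,x^*)$ the two players' best-response conditions are transposes of one another, hence a single inequality; you then get $x^*$ from Kakutani applied to the best-response correspondence $F(x)=\arg\max_{z\in\triangle_m} z^TAx$, whose nonempty convex compact values and closed graph (via Berge) you verify correctly. Nash's own proof is different in mechanism: he works with his continuous ``gain'' map $x\mapsto \bigl(x_i+\max\{0,e_i^TAx-x^TAx\}\bigr)_i$ (normalized), notes that for a symmetric game this map carries the symmetric (diagonal) profiles into themselves, and applies Brouwer to that invariant convex compact set; this handles arbitrary finite symmetric games with any number of players, whereas your collapse of the two best-response conditions into one is specific to two-player games with $B=A^T$. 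For the purposes of this paper, which only needs the bimatrix case, your argument is a complete and somewhat more elementary substitute for the citation.
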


For the proof of Theorem~\ref{1/3se} below we modify \ref{SDP2} so that we are seeking a symmetric solution. We also need a more specialized notion of the nonnegative rank.

\begin{defn} A matrix $M$ is \emph{completely positive} (CP) if it admits a decomposition $M=UU^T$ for some nonnegative matrix $U$.\end{defn}\label{Defn: Completely Positive}
\begin{defn} The \emph{CP-rank} of an $n \times n$ CP matrix $M$ is the smallest $k$ for which there exists a nonnegative $n \times k$ matrix $U$ such that $M = UU^T$.\end{defn}
\begin{theorem}[see e.g.~\cite{kalofolias2012computing} or Theorem 2.1 in~\cite{berman2003completely}]\label{CP rank 2}
	A rank-2, nonnegative, and positive semidefinite matrix is CP and has CP-rank 2.
\end{theorem}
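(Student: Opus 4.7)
The plan is to produce a nonnegative factorization $M = UU^T$ with $U \in \mathbb{R}^{n \times 2}$. Together with the trivial lower bound $\text{CP-rank}(M) \ge \operatorname{rank}(M) = 2$ (since any factorization $M = UU^T$ has $\operatorname{rank}(M) \le$ number of columns of $U$), this would simultaneously show that $M$ is completely positive and that its CP-rank is exactly $2$. Since $M$ is PSD of rank $2$, it admits a factorization $M = VV^T$ for some $V \in \mathbb{R}^{n \times 2}$ of rank $2$. For any $Q \in O(2)$, the matrix $U := VQ$ satisfies $UU^T = VQQ^TV^T = M$, so the task reduces to finding an orthogonal $Q$ such that $VQ \ge 0$ entrywise.

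Let $r_1, \dots, r_n \in \mathbb{R}^2$ denote the rows of $V$, viewed as column vectors, so that $M_{ij} = r_i^T r_j$. The hypothesis $M \ge 0$ is then equivalent to $r_i^T r_j \ge 0$ for all $i,j$; i.e., every pair of nonzero rows makes an angle of at most $\pi/2$. I claim this forces all nonzero $r_i$ to lie in a common closed angular sector of width at most $\pi/2$. Granted the claim, one chooses $Q$ to be the rotation carrying this sector into $\mathbb{R}^2_+$; then $Q^T r_i \ge 0$ for every $i$, so each row $r_i^T Q = (Q^T r_i)^T$ of $U = VQ$ is nonnegative, completing the construction.

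To verify the claim, pick a pair $(r_i, r_j)$ realizing the maximum pairwise angle $\phi$, which by assumption satisfies $\phi \le \pi/2$. Every other row $r_k$ satisfies $r_k^T r_i \ge 0$ and $r_k^T r_j \ge 0$, so $r_k$ lies in the intersection $H_i \cap H_j$ of the two half-planes $\{x \in \mathbb{R}^2 : x^T r_i \ge 0\}$ and $\{x \in \mathbb{R}^2 : x^T r_j \ge 0\}$, which is a sector of angular width $\pi - \phi < \pi$. Hence all $r_k$ lie in a common closed half-plane. Rotating so that this half-plane is the upper half and ordering the rows by argument $\theta_1 \le \dots \le \theta_n$ in $[0,\pi]$, the maximum pairwise angle becomes $\theta_n - \theta_1$, which must equal $\phi$; so the angular spread of the rows is exactly $\phi \le \pi/2$, as claimed. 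Degenerate cases (all rows zero or colinear) would force $\operatorname{rank}(M) \le 1$, contradicting the hypothesis.

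The main obstacle is precisely this planar geometric lemma about vectors in $\mathbb{R}^2$ with pairwise nonnegative inner products; once it is in hand, the rest of the argument is bookkeeping. A pleasant byproduct is that the construction is explicit: any pair of rows realizing the maximum angle determines $Q$, and thus the nonnegative factor $U$ can be read off directly from $V$.
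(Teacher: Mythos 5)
Your proof is correct. Note that the paper does not prove this statement at all---it is quoted as a known result with citations to Kalofolias--Gallopoulos and to Theorem 2.1 of Berman--Shaked-Monderer---so there is no internal proof to compare against. Your argument (factor $M=VV^T$ with $V\in\mathbb{R}^{n\times 2}$, observe that the rows of $V$ have pairwise nonnegative inner products and hence, being planar, lie in a closed sector of width at most $\pi/2$, then rotate that sector into the nonnegative quadrant) is precisely the standard proof found in those references, and the geometric lemma is handled soundly: the only point worth flagging is that the inequality $\pi-\phi<\pi$ fails when $\phi=0$, but that is exactly the collinear case you correctly dismiss via the rank-2 hypothesis.
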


It is also known (see e.g., Section 4 in~\cite{kalofolias2012computing}) that the CP factorization of a rank-2 CP matrix can be found to arbitrary accuracy in polynomial time.

\begin{theorem}\label{1/3se} \jeffo{Suppose the constraint $P \succeq 0$ is added to~\ref{SDP2}. Then if \jeffg{in a feasible solution to this new SDP the matrix $P$} is rank-2, either $x$ and $y$ constitute a symmetric $\frac{1}{3}$-NE, or a symmetric $\frac{1}{3}$-NE can be recovered from $P$ in polynomial time.}\end{theorem} 
\begin{proof}
	If $(x,y)$ is already a symmetric $\frac{1}{3}$-NE, then the claim is established. Now suppose that $(x,y)$ does not constitute a $\frac{1}{3}$-Nash equilibrium. \jeffo{Similarly as in the proof of Theorem \ref{Thm: 1-1/k}, we can decompose $P$ into $\sum_{i=1}^2 \sigma_i a_ia_i^T$, where $\sum_{i=1}^2 \sigma_i = 1$ and each $a_i$ is a vector on the unit simplex. Then we have
	\begin{equation*}
	\sigma_1\sigma_2(a_1-a_2)^T A (a_1-a_2) \ge \frac{1}{3}.
	\end{equation*}
	Since $A, a_1,$ and $a_2$ are all nonnegative, and $\sigma_1\sigma_2 \le \frac{1}{4}$, we get
	\begin{equation*}a_1^TAa_1+ a_2^TAa_2 \ge (a_1-a_2)^T A (a_1-a_2) \ge \frac{4}{3}.\end{equation*}}
	In particular, at least one of $a_1^TAa_1$ and $a_2^TAa_2$ is at least $\frac{2}{3}$. Since the maximum possible payoff is 1, at least one of $(a_1,a_1)$ and $(a_2,a_2)$ is a (symmetric) $\frac{1}{3}$-Nash equilibrium.
\end{proof}

\jeffr{\begin{remark}
	For symmetric games, instead of the construction stated in Theorem \ref{1/3se}, one can simply optimize over a smaller $m \times m$ matrix (note $m=n$). This is the relaxed version of exchangeable equilibria~\cite{stein1943exchangeable}, with the completely positive constraint relaxed to a psd constraint.
\end{remark}
}

\jeffg{\begin{remark}\label{Rem: Rank of M}

	The statements of Corollary \ref{1/2e}, and Theorem \ref{5/11e}, and Theorem \ref{1/3se} hold for any rank-2 correlated equilibrium. Indeed, given any rank-2 (equivalently, nonnegative-rank-2) correlated equilibrium $P$, one can complete it to a (rank-2) feasible solution to \ref{SDP2} as follows. Let $P = \sum_{i=1}^2 \sigma_ia_ib_i^T$, where $a_i \in \triangle_m, b_i \in \triangle_n$, and $\sigma_1 + \sigma_2 = 1$. It is easy to check that $$\M~\defeq~\sum_{i=1}^2 \sigma_i \bmat a_i \\ b_i \emat \bmat a_i \\ b_i \emat^T$$ is feasible to \ref{SDP2}.	
\end{remark}}

\section{Bounding Payoffs and Strategy Exclusion in Symmetric Games}\label{Bounding Payoffs and Strategy Exclusion}
In addition to finding $\epsilon$-additive Nash equilibria, our SDP approach can be used to answer certain questions of economic interest about Nash equilibria without actually computing them. For instance, economists often would like to know the maximum welfare (sum of the two players' payoffs) achievable under any Nash equilibrium, or whether there exists a Nash equilibrium in which a given subset of strategies (corresponding, e.g., to undesirable behavior) is not played. Both these questions are NP-hard for bimatrix games \cite{gilboa1989nash}\jeffo{, even when the game is symmetric and only symmetric equilibria are considered \cite{conitzer2008new}}. \jeffg{In this section, we consider these two problems in the symmetric setting and compare the performance of our SDP approach to an LP approach which searches over symmetric correlated equilibria. For general equilibria, it turns out that for these two specific questions, our SDP approach is equivalent to an LP that searches over correlated equilibria.}



\subsection{Bounding Payoffs}\label{Bounding Payoffs}
When designing policies that are subject to game theoretic behavior by agents, economists would often like to find one with a good socially optimal outcome, which usually corresponds to an equilibrium giving the maximum welfare. Hence, given a game, it is of interest to know the highest achievable welfare under any Nash equilibrium. \jeffg{For symmetric games, symmetric equilibria are of particular interest as they reflect the notion that identical agents should behave similarly given identical options.}

\jeffg{Note that the maximum welfare of a symmetric game under any symmetric Nash equilibrium is equal to the optimal value of the following quadratic program:}

\jeffo{
\begin{equation}\label{Payoff QP}
\begin{aligned}
& \underset{x \in \triangle_m}{\max}
& & 2x^TAx\\
& \text{subject to}
& & x^TAx \ge e_i^TAx, \forall i\in \{1,\ldots, m\}.\\
\end{aligned}
\end{equation}
}

\jeffo{One can find an upper bound on this number by solving an LP which searches over symmetric correlated equilibria:

\begin{align}\label{LP1}\tag{LP1}
& \underset{P \in \mathbb{S}^{m, m}}{\max}
& & \Tr(AP^T)& \nonumber\\
& \text{subject to}
& & \sum_{i=1}^m \sum_{j=1}^m P_{i,j} = 1\label{SDP3 Distribution}&\\
&&& \sum_{j=1}^m A_{i,j}P_{i,j} \ge \sum_{j=1}^m A_{k,j}P_{i,j}, \forall i,k \in \{1,\ldots, m\}, & \label{SDP3 CE}\\
&&& P \ge 0.\label{SDP3 Nonnegativity}&
\end{align}

A potentially better upper bound on the maximum welfare can be obtained from a version of \ref{SDP2} adapted to this specific problem: 

\begin{align}\label{SDP3}\tag{SDP3}
& \underset{P \in \mathbb{S}^{m, m}}{\max}
& & \Tr(AP^T)& \nonumber\\
& \text{subject to}
& & (\ref{SDP3 Distribution}), (\ref{SDP3 CE}), (\ref{SDP3 Nonnegativity})\nonumber&\\
&&& P \succeq 0.\nonumber&
\end{align}}

To test the quality of these upper bounds, we tested this \jeffg{LP and SDP} on a random sample of one hundred $5\times 5$ and $10 \times 10$ games\footnote{\jeffg{The matrix $A$ in each game was randomly generated with diagonal entries uniform and independent in [0,.5] and off-diagonal entries uniform and independent in [0,1]}.}. The resulting upper bounds are in Figure~\ref{welfareapprox}, which shows that the bound returned by~\ref{SDP3} was exact in a large number of the experiments.\footnote{The computation of the exact maximum payoffs was done with the \texttt{lrsnash} software~\cite{avis2010enumeration}, which computes all extreme Nash equilibria. For a definition of extreme Nash equilibria and for understanding why it is sufficient for us to compare against extreme Nash equilibria (both in Section \ref{Bounding Payoffs} and in Section~\ref{Strategy Exclusion}), see Appendix~\ref{Lemmas for Extreme Nash Equilibria}. The computation of the SDP upper bound has been implemented in the file {nashbound.m}, which is publicly available at \url{aaa.princeton.edu/software}. \jeffg{This file more generally computes an SDP-based lower bound on the minimum of an input quadratic function over the set of Nash equilibria of a bimatrix game. The file also takes as an argument whether one wishes to only consider symmetric equilibria when the game is symmetric.}}

\jeff{
\begin{figure}[H]
	\includegraphics[height=.25\textheight,keepaspectratio]{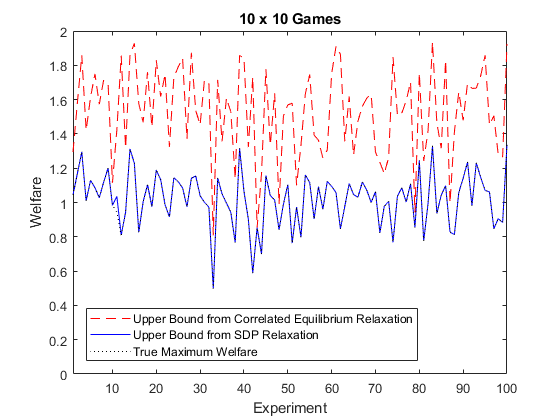}
	\includegraphics[height=.25\textheight,keepaspectratio]{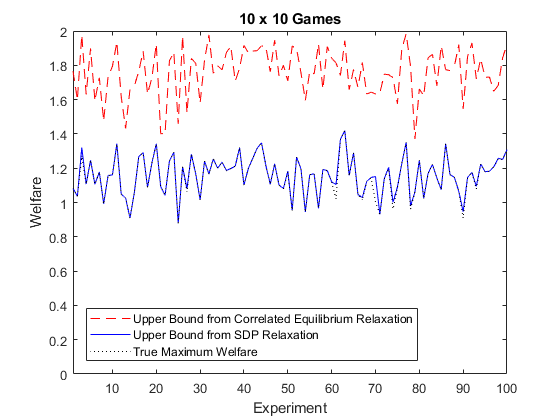}
	\caption{\jeff{The quality of the upper bound on the maximum welfare obtained by\jeffo{~\ref{LP1}} and~\ref{SDP3} on 100 $5\times 5$ games (left) and 100 $10\times 10$ games (right).\label{welfareapprox}}}
\end{figure}
}

\subsection{Strategy Exclusion}\label{Strategy Exclusion}
The strategy exclusion problem asks, given a subset of strategies $\mathcal{S}=(\mathcal{S}_x,\mathcal{S}_y)$, with $ \mathcal{S}_x \subseteq\{1,\ldots,m\}$ and $\mathcal{S}_y\subseteq\{1,\ldots,n\}$, is there a Nash equilibrium in which no strategy in $\mathcal{S}$ is played with positive probability. We will call a set $\mathcal{S}$ ``persistent'' if the answer to this question is negative, i.e. at least one strategy in $\mathcal{S}$ is played with positive probability in every Nash equilibrium. One application of the strategy exclusion problem is to understand whether certain strategies can be discouraged in the design of a game, such as reckless behavior in a game of chicken or defecting in a game of prisoner's dilemma. In these particular examples these strategy sets are persistent and cannot be discouraged.

\jeffo{As in the previous subsection, we consider the strategy exclusion problem for symmetric strategies in symmetric games (such as the aforementioned games of chicken and prisoner's dilemma). A quadratic program which addresses this problem is as follows:

\begin{samepage}
	\begin{equation}\label{Exclusion QP}
	\begin{aligned}
	& \underset{x\in\triangle_m}{\min}
	& & \sum_{i \in \mathcal{S}_x} x_i\\
	& \text{subject to}
	& & x^TAx \ge e_i^TAx, \forall i\in \{1,\ldots, m\}.\\
	\end{aligned}
	\end{equation}
\end{samepage}

Observe that by design, $\mathcal{S}$ is persistent if and only if this quadratic program has a positive optimal value. As in the previous subsection, an LP relaxation of this problem which searches over symmetric correlated equilibria is given by

\begin{samepage}
	\begin{align}\label{LP2}\tag{LP2}
	& \underset{P \in \mathbb{S}^{m, m}}{\min}
	& & \sum_{i \in \mathcal{S}_x}\sum_{j=1}^m P_{ij} &\nonumber\\
	& \text{subject to}
	& & (\ref{SDP3 Distribution}), (\ref{SDP3 CE}), (\ref{SDP3 Nonnegativity}).\nonumber&
	\end{align}
\end{samepage}

The SDP relaxation that we propose for the strategy exclusion problem is the following:

\begin{samepage}
	\begin{align}\label{SDP4}\tag{SDP4}
	& \underset{P \in \mathbb{S}^{m, m}}{\min}
	& & \sum_{i \in \mathcal{S}_x}\sum_{j=1}^m P_{ij} &\nonumber\\
	& \text{subject to}
	& & (\ref{SDP3 Distribution}), (\ref{SDP3 CE}), (\ref{SDP3 Nonnegativity})&\nonumber\\
	&&& P \succeq 0.\nonumber&
	\end{align}
\end{samepage}}

Our approach would be to declare that the strategy set $\mathcal{S}_x$ is persistent if and only if~\ref{SDP4} has a positive optimal value.

Note that since the optimal value of~\ref{SDP4} is a lower bound for that of (\ref{Exclusion QP}),~\ref{SDP4} carries over the property that if a set $\mathcal{S}$ is not persistent, then the SDP for sure returns zero. Thus, when using~\ref{SDP4} on a set which is not persistent, our algorithm will always be correct. However, this is not necessarily the case for a persistent set. While we can be certain that a set is persistent if~\ref{SDP4} returns a positive optimal value (again, because the optimal value of~\ref{SDP4} is a lower bound for that of (\ref{Exclusion QP})), there is still the possibility that for a persistent set~\ref{SDP4} will have optimal value zero. \jeffg{The same arguments hold for the optimal value of \ref{LP2}}.

To test the performance of \jeffg{\ref{LP2} and} \ref{SDP4}, we generated 100 random games of size $5\times 5$ and $10\times 10$ and computed all their \jeffg{symmetric} extreme Nash equilibria\footnote{The exact computation of the exact Nash equilibria was done again with the \texttt{lrsnash} software \cite{avis2010enumeration}, which computes extreme Nash equilibria. To understand why this suffices for our purposes see Appendix~\ref{Lemmas for Extreme Nash Equilibria}.}. We then, for every strategy set $\mathcal{S}$ of cardinality one and two, checked whether that set of strategies was persistent, first by checking among the extreme Nash equilibria, then through \jeffg{\ref{LP2} and} \ref{SDP4}. The results are presented in Tables~\ref{SE Table 5x5 1} and~\ref{SE Table 10x10 1}. As can be seen,~\ref{SDP4} was quite effective for the strategy exclusion problem.

\jeffg{
	\begin{table}[h]
		\caption{\jeffg{Performance of~\ref{LP2} and~\ref{SDP4} on $5\times 5$ games}}
		{\begin{tabular}{|c|c|c|}\hline
				$|\mathcal{S}|$ & 1 & 2\\\hline
				Number of total sets&	500 & 1000\\\hline
				Number of persistent sets&	245 & 748 \\\hline
				Persistent sets certified (\ref{LP2}) & 177 (72.2\%) & 661 (88.7\%)\\\hline
				Persistent sets certified (\ref{SDP4})& 245 (100\%) & 748 (100\%)\\\hline
			\end{tabular}\label{SE Table 5x5 1}}
		\centering
		{}
	\end{table}
	\begin{table}[h]
		\caption{\jeffg{Performance of~\ref{LP2} and~\ref{SDP4} on $10\times 10$ games}}
		{\begin{tabular}{|c|c|c|}\hline
				$|\mathcal{S}|$ & 1 & 2\\\hline
				Number of total sets& 1000 & 4500\\\hline
				Number of persistent sets&	326 & 2383 \\\hline
				Persistent sets certified (\ref{LP2}) & 39 (12.0\%) & 630 (26.4\%)\\\hline
				Persistent sets certified (\ref{SDP4})& 318 (97.5\%) & 2368 (99.4\%)\\\hline
			\end{tabular}\label{SE Table 10x10 1}}
		\centering
		{}
	\end{table}
	}

\section{\jeff{Connection to the Sum of Squares/Lasserre Hierarchy}}\label{Sec: Connection to Sum of Squares/Lasserre Hierarchy}

\jeff{In this section, we clarify the connection of the SDPs we have proposed in this paper to those arising in the sum of squares/Lasserre hierarchy. We start by briefly reviewing this hierarchy.}

\subsection{Sum of Squares/Lasserre Hierarchy}\label{Lassere's Hierarchy}
The sum of squares/Lasserre hierarchy\footnote{The unfamiliar reader is referred to~\cite{lasserre2001global,parrilo2003semidefinite,laurent2009sums} for an introduction to \jeff{this} hierarchy and the related theory of moment relaxations.} gives a recipe for constructing a sequence of SDPs whose optimal values converge to the optimal value of a given polynomial optimization problem. Recall that a \emph{polynomial optimization problem} (pop) is a problem of minimizing a polynomial over a basic semialgebraic set, i.e., a problem of the form
\begin{equation}\label{pop defn}
\begin{aligned}
& \underset{x \in \mathbb{R}^n}{\min}
& & f(x) \\
& \text{subject to}
&& g_i(x) \ge 0, \forall i \in \{1,\ldots,m\},\\
\end{aligned}
\end{equation}
where $f,g_i$ are polynomial functions. 
In this section, when we refer to the $k$-th level of the Lasserre hierarchy, we mean the optimization problem

\begin{equation}\label{Lasserre Hierarchy defn}
\begin{aligned}
\gamma_{sos}^k \defeq & \underset{\gamma,\sigma_i}{\max}
&& \gamma\\
& \text{subject to}
&& f(x)-\gamma = \sigma_0(x) + \sum_{i=1}^m \sigma_i(x)g_i(x),\\
&&&\sigma_i \text{ is sos, }\forall i \in \{0,\ldots,m\},\\
&&&\sigma_0, g_i\sigma_i \text{ have degree at most } 2k,\ \forall i \in \{1,\ldots,m\}.
\end{aligned}
\end{equation}
\jeff{Here, the notation ``sos'' stands for \emph{sum of squares}. We say that a polynomial $p$ is a sum of squares if there exist polynomials $q_1,\ldots,q_r$ such that $p= \sum_{i=1}^r q_i^2$.} There are two primary properties of the Lasserre hierarchy which are of interest. \jeff{The first is that any fixed level of this hierarchy gives an SDP of size polynomial in $n$.} The second is that, if the set $\{x\in\mathbb{R}^n|g_i(x)\ge 0\}$ is Archimedean (see, e.g. \cite{laurent2009sums} for definition), then $\underset{k \to \infty}{\lim} \gamma_{sos}^k = p^*$, where $p^*$ is the optimal value of the pop in (\ref{pop defn}). \jeff{The latter statement is a consequence of Putinar's positivstellensatz~\cite{putinar1993positive},~\cite{lasserre2001global}.}


\subsection{The Lasserre Hierarchy and~\ref{SDP1}}\label{The Lasserre Hierarchy and SDP1}

\jeff{One can show, e.g. via the arguments in \cite{lasserre2009convex}, that the feasible sets of the SDPs dual to the SDPs underlying the hierarchy we summarized above produce an arbitrarily tight outer approximation to the convex hull of the set of Nash equilibria of any game. The downside of this approach, however, is that the higher levels of the hierarchy can get expensive very quickly. This is why the approach we took in this paper was instead to improve the first level of the hierarchy. The next proposition formalizes this connection. 


%
}
\begin{prop}\label{strongerLasserre}
	Consider the problem of minimizing any quadratic objective function over the set of Nash equilibria of a bimatrix game. Then,~\ref{SDP1} (and hence~\ref{SDP2}) gives a lower bound on this problem which is no worse than that produced by the first level of the Lasserre hierarchy.
\end{prop}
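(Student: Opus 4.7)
The plan is to identify the feasible set of \ref{SDP1} with (a tightening of) the feasible set of the dual moment formulation of the first level of the Lasserre hierarchy, and then argue containment of feasible sets. A pseudo-expectation $L$ on polynomials of degree at most $2$ in the $m+n$ variables $(x_1,\ldots,x_m,y_1,\ldots,y_n)$ is encoded by its moment matrix $M_1(L)$ of size $(m+n+1)\times(m+n+1)$, indexed by $\{1,x_1,\ldots,x_m,y_1,\ldots,y_n\}$. Under the identification $M_1(L)=\M'$, the blocks $X$, $P=Z^T$, $Y$ record $L(x_ix_j)$, $L(x_iy_j)$, $L(y_iy_j)$, and the last row/column records $L(x_i)$, $L(y_i)$, $L(1)$. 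Any quadratic objective $f(x,y)$ can then be written as a linear functional of the entries of $\M'$, so both SDPs optimize the same linear form of the moment matrix over their respective feasible sets; the proposition will follow from showing that the feasible set of \ref{SDP1} is contained in that of Lasserre level one.

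Next I would translate each constraint of the first-level Lasserre relaxation, as given by (\ref{Lasserre Hierarchy defn}) with $k=1$, into the language of $\M'$. The constraint $M_1(L)\succeq 0$ is precisely (\ref{SDP1 PSD}), and $L(1)=1$ is the normalization $\M'_{m+n+1,m+n+1}=1$. For each Nash inequality $g_i(x,y)\ge 0$, which is quadratic, the sos multiplier $\sigma_i$ must satisfy $\deg(g_i\sigma_i)\le 2$, forcing $\sigma_i$ to be a nonnegative constant. Hence the localizing constraint reduces to $L(g_i)\ge 0$, which, after identification of moments with entries of $\M'$, is exactly (\ref{SDP1 Relaxed Nash A}) or (\ref{SDP1 Relaxed Nash B}). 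For the linear constraints $x_i\ge 0$, $y_i\ge 0$, the same degree bound forces constant multipliers again, producing $L(x_i)\ge 0$, $L(y_i)\ge 0$, which are already implied by the entrywise nonnegativity (\ref{SDP1 Nonnegativity}) of $\M'$.

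The main subtlety is the handling of the simplex equalities, since (\ref{Lasserre Hierarchy defn}) is stated only for inequality constraints. I would split each equality $\sum_i x_i=1$ into the two inequalities $\sum_i x_i-1\ge 0$ and $1-\sum_i x_i\ge 0$ (and analogously for $y$). At level one, each of these degree-one constraints admits only constant sos multipliers, so the corresponding localizing constraints collapse to $L(\sum_i x_i-1)\ge 0$ and $L(1-\sum_i x_i)\ge 0$, which together yield $\sum_i L(x_i)=1$, i.e.\ (\ref{SDP1 Unity x}), and similarly (\ref{SDP1 Unity y}). Thus every constraint of Lasserre level one appears among the constraints of \ref{SDP1}, and the only additional requirement of \ref{SDP1} is the entrywise nonnegativity (\ref{SDP1 Nonnegativity}), which is a valid inequality on the Nash set. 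Consequently the feasible set of \ref{SDP1}, viewed through the identification $M_1(L)=\M'$, is contained in the feasible set of the first-level moment relaxation. Since both SDPs minimize the same linear functional over these feasible sets, the minimum of \ref{SDP1} is at least that of Lasserre level one, and this bound is inherited by \ref{SDP2} because its feasible set is a subset of that of \ref{SDP1}. The same argument also explains why the row constraints (\ref{SDP2 Row x}) and (\ref{SDP2 Row y}) of \ref{SDP2} are not captured by level one: they would require multiplying $\sum_i x_i-1=0$ by linear polynomials, which corresponds to a Putinar-style treatment of equalities rather than the inequality-only formulation fixed in (\ref{Lasserre Hierarchy defn}).
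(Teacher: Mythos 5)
Your argument is correct, and it establishes the same inequality chain as the paper, but from the opposite side of the moment/sum-of-squares duality. The paper works on the SOS side: it writes out the level-one certificate explicitly as the SDP~(\ref{Lassere Level 1}), introduces a weakened version of~\ref{SDP1} (its~\ref{SDP0}, which keeps only the relaxed Nash, unity, and last-column nonnegativity constraints), computes the dual of that weakened SDP in standard form, and checks by matching matrices that this dual coincides with~(\ref{Lassere Level 1}); weak duality then gives $(\ref{Lassere Level 1})^{opt} \le \ref{SDP0}^{opt} \le \ref{SDP1}^{opt}$. You instead recognize~\ref{SDP0} directly as the level-one moment relaxation and argue feasible-set containment: the degree cap $2k=2$ forces every multiplier $\sigma_i$ to be a nonnegative constant (and the split equalities to yield free scalars), so the localizing constraints collapse to $L(g_i)\ge 0$, which are exactly the relaxed Nash, unity, and sign constraints of~\ref{SDP1}; the entrywise nonnegativity of $\M'$ is then a pure tightening. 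The two routes are equivalent in content --- both hinge on the degree bound trivializing the multipliers and on weak duality between the moment and SOS programs --- and the paper's own remark after the proposition acknowledges your viewpoint ("all our SDPs \ldots can be viewed more directly as an improvement upon the moment formulation"). What the paper's explicit dualization buys is a concrete verification that the matrices $\mathcal{G}$ and $\mathcal{H}$ agree, without appealing to the general moment/SOS duality as a black box; what your route buys is brevity and a cleaner explanation of exactly which constraints of~\ref{SDP2} (e.g., the row constraints) escape level one. The only loose end is that you should state explicitly that you are invoking weak duality between the moment relaxation and the SOS formulation~(\ref{Lasserre Hierarchy defn}), since the proposition as stated compares against the latter; this is a one-line addition.
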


\begin{proof}
	
	To prove this proposition we show that the first level of the Lasserre hierarchy is dual to a weakened version of~\ref{SDP1}.
	
	\textbf{Explicit parametrization of first level of the Lasserre hierarchy.} Consider the formulation of the Lasserre hierarchy in (\ref{Lasserre Hierarchy defn}) with $k=1$. Suppose we are minimizing a quadratic function $$f(x,y)=\xy1vec^T \mathcal{C} \xy1vec$$ over the set of Nash equilibria as described by the linear and quadratic constraints in (\ref{QCQP Formulation}). If we apply the first level of the Lasserre hierarchy to this particular pop, we get
	
	\begin{equation}
	\begin{aligned}\label{LH1 long}
	\underset{Q, \alpha, \chi, \beta, \psi, \eta}{\max}
	&& \gamma\\
	\text{subject to}
	&& \xy1vec^T \mathcal{C} \xy1vec-\gamma &= \xy1vec^T Q \xy1vec+\sum_{i=1}^m \alpha_i(x^TAy - e_i^TAy)\\
	&& &+ \sum_{i=1}^n \beta_i(x^TBy - x^TBe_i)\\
	&& &+ \sum_{i=1}^m \chi_i x_i + \sum_{i=1}^n \psi_i y_i\\
	&& &+\eta_1 (\sum_{i=1}^m x_i - 1)+ \eta_2 (\sum_{i=1}^n y_i -1),\\
	&& Q &\succeq 0,\\
	&& \alpha, \chi, \beta, \psi &\ge 0,
	\end{aligned}
	\end{equation}
	where $Q \in \mathbb{S}^{m+n+1 \times m+n+1},\alpha, \chi \in \mathbb{R}^m, \beta, \psi \in \mathbb{R}^n, \eta \in \mathbb{R}^2$.
	
	
	By matching coefficients of the two quadratic functions on the left and right hand sides of (\ref{LH1 long}), this SDP can be written as
	\begin{equation}\label{Lassere Level 1}
	\begin{aligned}
	& \underset{\gamma,\alpha,\beta,\chi,\psi,\eta}{\max}
	&& \gamma\\
	& \text{subject to}
	&& \mathcal{H} \succeq 0,\\
	&&& \alpha, \beta, \chi, \psi \ge 0,
	\end{aligned}
	\end{equation}
	where
	
	\begin{equation}\label{LH1 H}
	\mathcal{H} \mathrel{\mathop:}= \frac{1}{2}\bmat 0 & (-\sum_{i=1}^m \alpha_i )A + (-\sum_{i=1}^m \beta_i)B & \sum_{i=1}^n \beta_i B_{,i}-\chi - \eta_11_m\\
	(-\sum_{i=1}^m \alpha_i)A + (-\sum_{i=1}^n \beta_i)B & 0 & \sum_{i=1}^m \alpha_i A_{i,}^T-\psi - \eta_21_n\\
	\sum_{i=1}^n \beta_iB_{,i}^T-\chi^T-\eta_11_m^T & \sum_{i=1}^m \alpha_i A_{i,}-\psi^T - \eta_21_n^T  & 2\eta_1+2\eta_2-2\gamma
	\emat+\mathcal{C}.\end{equation}

	\textbf{Dual of a weakened version of SDP1.} With this formulation in mind, let us consider a weakened version of~\ref{SDP1} with only the relaxed Nash constraints, unity constraints, and nonnegativity constraints on $x$ and $y$ in the last column (i.e., the nonegativity constraint is not applied to the entire matrix). Let the objective be $\Tr(C\M')$. To write this new SDP in standard form, let\\
	$$\mathcal{A}_i \mathrel{\mathop:}= \frac{1}{2}\bmat 0 & A & 0\\ A^T & 0 & -A_{i,}^T\\ 0 & -A_{i,} & 0\emat, \mathcal{B}_i \mathrel{\mathop:}= \frac{1}{2}\bmat 0 & B & -B_{,i}\\ B^T & 0 & 0\\-B_{,i}^T & 0 & 0 \emat,$$
	$$\mathcal{S}_1 \mathrel{\mathop:}= \frac{1}{2}\bmat 0 & 0 & 1_m\\ 0 & 0 & 0\\1_m^T & 0 & -2\emat, \mathcal{S}_2 \mathrel{\mathop:}= \frac{1}{2}\bmat 0 & 0 & 0\\ 0 & 0 & 1_n\\ 0 & 1_n^T & -2 \emat.$$
	Let $\mathcal{N}_i$ be the matrix with all zeros except a $\frac{1}{2}$ at entry $(i,m+n+1)$ and $(m+n+1,i)$ (or a 1 if $i=m+n+1$).\\
	Then this SDP can be written as\\
	\begin{align}\label{SDP0}\tag{SDP0}
	& \underset{\M'}{\min}
	& & \Tr(\mathcal{C}\M') \\
	& \text{subject to}
	& & \mathcal{M}' \succeq 0,\label{SDP0 PSD}\\
	&&& \Tr(\mathcal{N}_{i}\M') \ge 0, \forall i \in \{1,\ldots,m+n\}, \label{SDP0 nonnegativity}\\
	&&& \Tr(\mathcal{A}_i\M') \ge 0, \forall i \in \{1,\ldots,m\}, \label{SDP0 Nash A}\\
	&&& \Tr(\mathcal{B}_i\M') \ge 0, \forall i \in \{1,\ldots,n\}, \label{SDP0 Nash B}\\
	&&& \Tr(\mathcal{S}_1\M') = 0, \label{SDP0 simplex x}\\
	&&& \Tr(\mathcal{S}_2\M') = 0, \label{SDP0 simplex y}\\
	&&& \Tr({\mathcal{N}}_{m+n+1}) = 1 \label{SDP0 Corner}.
	\end{align}
	
	We now create dual variables for each constraint; we choose $\alpha_i$ and $\beta_i$ for the relaxed Nash constraints (\ref{SDP0 Nash A}) and (\ref{SDP0 Nash B}), $\eta_1$ and $\eta_2$ for the unity constraints (\ref{SDP0 simplex x}) and (\ref{SDP0 simplex y}), $\chi$ for the nonnegativity of $x$ (\ref{SDP0 nonnegativity}), $\psi$ for the nonnegativity of $y$ (\ref{SDP0 nonnegativity}), and $\gamma$ for the final constraint on the corner (\ref{SDP0 Corner}). These variables are chosen to coincide with those used in the parametrization of the first level of the Lasserre hierarchy, as can be seen more clearly below.\\
	
	We then write the dual of the above SDP as\\
	\begin{equation*}
	\begin{aligned}
	& \underset{\alpha, \beta, \lambda, \gamma}{\max}&& \gamma\\
	&\text{subject to} && \sum_{i = 1}^m \alpha_i \mathcal{A}_i + \sum_{i = 1}^n \beta_i \mathcal{B}_i + \sum_{i = 1}^2 \eta_i\mathcal{S}_i+\sum_{i=1}^m \mathcal{N}_{i+n}\chi_i+\sum_{i=1}^n \mathcal{N}_{i}\psi_i + \gamma \mathcal{N}_{m+n+1} \preceq \mathcal{C},\\\
	&&& \alpha, \beta, \chi, \psi \ge 0.
	\end{aligned}
	\end{equation*}
	which can be rewritten as
	\begin{equation} \label{SDP1 Dual}
	\begin{aligned}
	& \underset{\alpha, \beta, \chi, \psi, \gamma}{\max}&& \gamma\\
	&\text{subject to} && \mathcal{G} \succeq 0,\\\
	&&& \alpha, \beta, \chi,\psi \ge 0,
	\end{aligned}
	\end{equation}
	
	where
	\begin{equation*}
	\mathcal{G} \defeq \frac{1}{2}\bmat 0 & (-\sum_{i=1}^m \alpha_i )A + (-\sum_{i=1}^m \beta_i)B & \sum_{i=1}^n \beta_i B_{,i}-\chi - \eta_11_m\\
	(-\sum_{i=1}^m \alpha_i)A + (-\sum_{i=1}^n \beta_i)B & 0 & \sum_{i=1}^m \alpha_i A_{i,}^T-\psi - \eta_21_n\\
	\sum_{i=1}^n \beta_iB_{,i}^T-\chi^T-\eta_11_m^T & \sum_{i=1}^m \alpha_i A_{i,}-\psi^T - \eta_21_n^T  & 2\eta_1+2\eta_2-2\gamma
	\emat+\mathcal{C}.
	\end{equation*}
	
	We can now see that the matrix $\mathcal{G}$ coincides with the matrix $\mathcal{H}$ in the SDP (\ref{Lassere Level 1}). Then we have $$(\ref{LH1 long})^{opt}=(\ref{Lassere Level 1})^{opt} = (\ref{SDP1 Dual})^{opt} \le~\ref{SDP0}^{opt} \le~\ref{SDP1}^{opt},$$
	where the first inequality follows from weak duality, and the second follows from that the constraints of~\ref{SDP0} are a subset of the constraints of~\ref{SDP1}.
\end{proof}

\begin{remark}
	The Lasserre hierarchy can be viewed in each step as a pair of primal-dual SDPs: the sum of squares formulation which we have just presented, and a moment formulation which is dual to the sos formulation~\cite{lasserre2001global}. All our SDPs in this paper can be viewed more directly as an improvement upon the moment formulation. 
\end{remark}

\begin{remark}
	One can see, either by inspection or as an implication of the proof of Theorem~\ref{Necessity of VI}, that in the case where the objective function corresponds to maximizing player A's and/or B's payoffs\footnote{This would be the case, for example, in the maximum social welfare problem of Section~\ref{Bounding Payoffs}, where the matrix of the quadratic form in the objective function is given by $$\mathcal{C}=\bmat 0 & -A-B & 0\\-A-B&0&0\\0&0&0\emat.$$}, SDPs (\ref{Lassere Level 1}) and (\ref{SDP1 Dual}) are infeasible. This means that for such problems the first level of the Lasserre hierarchy gives an upper bound of $+\infty$ on the maximum payoff. On the other hand, the additional valid inequalities in~\ref{SDP2} guarantee that the resulting bound is always finite.
\end{remark}

\section{Future Work} \label{Conclusion}

Our work leaves many avenues of further research. Are there other interesting subclasses of games (besides \jeff{strictly competitive games}) for which our SDP is guaranteed to recover an exact Nash equilibrium? Can the guarantees on $\epsilon$ in Section~\ref{Bounds} be improved in the rank-2 case (or the general case) by improving our analysis? Is there a polynomial time algorithm that is guaranteed to find a rank-2 solution to~\ref{SDP2}? Such an algorithm, together with our analysis, would improve the best known approximation bound for symmetric games (see Theorem \ref{1/3se}). \jeffb{Can this bound be extended to general games? We show in Appendix \ref{Sec: Irreducibility} that some natural approaches based on symmetrization of games do not immediately lead to a positive answer to this question.} Can SDPs in a higher level of the Lasserre hierarchy be used to achieve better $\epsilon$ guarantees? What are systematic ways of adding valid inequalities to these higher-order SDPs by exploiting the structure of the Nash equilibrium problem? \jeff{For example, since any strategy played with positive probability must give the same payoff, one can add a relaxed version of the cubic constraints $$x_ix_j(e_i^TAy - e_j^TAy)=0, \forall i,j \in \{1,\ldots,m\}$$ to the SDP underlying the second level of the Lasserre hierarchy. What are other valid inequalities for the second level?} Finally, our algorithms were specifically designed for two-player one-shot games. This leaves open the design and analysis of semidefinite relaxations for repeated games or games with more than two players.

{\aaa 
\section*{Acknowledgments.}
We would like to thank Ilan Adler, Costis Daskalakis, Georgina Hall, Ramon van Handel, and Robert Vanderbei for insightful exchanges. We are also extremely grateful to an anonymous referee for various insightful questions and comments which have led to a significantly improved version of this manuscript.
}

\bibliographystyle{abbrv}
\bibliography{Nash_SDP_refs}

\appendix

\section{Statistics on $\epsilon$ from Algorithms in Section~\ref{Algorithms}}\label{Appendex Statistics}
Below are statistics for the $\epsilon$ recovered in 100 random games of varying sizes using the algorithms of Section~\ref{Algorithms}. 

\begin{table}[H]
	\caption{Statistics on $\epsilon$ for $5\times 5$ games after 20 iterations.\label{5x5}}
	{\begin{tabular}{|c|c|c|c|c|c|}\hline
			Algorithm & Max & Mean & Median & StDev\\\hline
			Square Root&	0.0702&	0.0040&	0.0004&	0.0099	\\\hline
			Diagonal Gap&	0.0448&	0.0027&	0&	0.0061	\\\hline
	\end{tabular}}
	\centering
	{}
\end{table}

\begin{table}[H]
	\caption{Statistics on $\epsilon$ for $10\times 5$ games after 20 iterations.\label{10x5}}
	{\begin{tabular}{|c|c|c|c|c|c|}\hline
			Algorithm & Max & Mean & Median & StDev\\\hline
			Square Root&	0.0327&	0.0044&	0.0021&	0.0064	\\\hline
			Diagonal Gap&	0.0267&	0.0033&	0.0006&	0.0053	\\\hline
	\end{tabular}}
	\centering
	{}
\end{table}

\begin{table}[H]
	\caption{Statistics on $\epsilon$ for $10\times 10$ games after 20 iterations.\label{10x10}}
	{\begin{tabular}{|c|c|c|c|c|c|}\hline
			Algorithm & Max & Mean & Median & StDev\\\hline
			Square Root&	0.0373&	0.0058&	0.0039&	0.0065	\\\hline
			Diagonal Gap&	0.0266&	0.0043&	0.0026&	0.0051	\\\hline
	\end{tabular}}
	\centering
	{}
\end{table}

\begin{table}[H]
	\caption{Statistics on $\epsilon$ for $15\times 10$ games after 20 iterations.\label{15x10}}
	{\begin{tabular}{|c|c|c|c|c|c|}\hline
			Algorithm & Max & Mean & Median & StDev\\\hline
			Square Root&	0.0206&	0.0050&	0.0034&	0.0045	\\\hline
			Diagonal Gap&	0.0212&	0.0038&	0.0025&	0.0039	\\\hline
	\end{tabular}}
	\centering
	{}
\end{table}

\begin{table}[H]
	\caption{Statistics on $\epsilon$ for $15\times 15$ games after 20 iterations.\label{15x15}}
	{\begin{tabular}{|c|c|c|c|c|c|}\hline
			Algorithm & Max & Mean & Median & StDev\\\hline
			Square Root&	0.0169&	0.0051&	0.0042&	0.0039	\\\hline
			Diagonal Gap&	0.0159&	0.0038&	0.0029&	0.0034	\\\hline
	\end{tabular}}
	\centering
	{}
\end{table}

\begin{table}[H]
	\caption{Statistics on $\epsilon$ for $20\times 15$ games after 20 iterations.\label{15x20}}
	{\begin{tabular}{|c|c|c|c|c|c|}\hline
			Algorithm & Max & Mean & Median & StDev\\\hline
			Square Root&	0.0152&	0.0046&	0.0035&	0.0036	\\\hline
			Diagonal Gap&	0.0119&	0.0032&	0.0022&	0.0027	\\\hline
	\end{tabular}}
	\centering
	{}
\end{table}

\begin{table}[H]
	\caption{Statistics on $\epsilon$ for $20\times 20$ games after 20 iterations.\label{20x20}}
	{\begin{tabular}{|c|c|c|c|c|c|}\hline
			Algorithm & Max & Mean & Median & StDev\\\hline
			Square Root&	0.0198&	0.0046&	0.0039&	0.0034	\\\hline
			Diagonal Gap&	0.0159&	0.0032&	0.0024&	0.0032	\\\hline
	\end{tabular}}
	\centering
	{}
\end{table}

\jeffg{\section{Comparison with an SDP Approach from \cite{laraki2012semidefinite}}\label{Sec: Comparison}
In this section, at the request of a referee, we compare the first level of the SDP hierarchy given in \cite[Section 4]{laraki2012semidefinite} to~\ref{SDP2} using $\Tr(M)$ as the objective function on 100 randomly generated games for each size given in the tables below. The first level of the hierarchy in \cite{laraki2012semidefinite} optimizes over a matrix which is slightly bigger than the one in \ref{SDP2}, though it has a number of constraints linear in the size of the game considered, as opposed to the quadratic number in \ref{SDP2}. We remark that the approach in \cite{laraki2012semidefinite} is applicable more generally to many other problems, including several in game theory.

The scalar $\epsilon$ reported in Table \ref{Table: SDP5 Statistics} is computed using the strategies $(x,y)$ extracted from the first row of the optimal matrix $M_1$ as described in Section 4.1 of \cite{laraki2012semidefinite}. The scalar $\epsilon$ reported in Table \ref{Table: Trace Statistics} is computed using $x = P1_n$ and $y = P^T1_m$ from the optimal solution to \ref{SDP2} with $\Tr(\M)$ as the objective function.

\begin{table}[H]
	\caption{Statistics on $\epsilon$ for first level of the hierarchy in \cite{laraki2012semidefinite}.\label{Table: SDP5 Statistics}}
	{\begin{tabular}{|c|c|c|c|c|c|c|c|c|c|c|}\hline
			& $5 \times 5$& $10 \times 5$ & $10 \times 10$ & $15 \times 10$ & $15 \times 15$ & $20 \times 15$ & $20 \times 20$ \\\hline
			Max &	0.3357 &	0.3304 &	0.2557 &	0.2189 &	0.1987 &	0.1837 &	0.1828 \\\hline
			Mean &	0.1883 &	0.1889 &	0.1513 &	0.1446 &	0.1262 &	0.1217 &	0.1087 \\\hline
			Median &	0.1803 &	0.1865 &	0.1452 &	0.1418 &	0.1271 &	0.1208 &	0.1070 \\\hline
	\end{tabular}}
	\centering
\end{table}

\begin{table}[H]
	\caption{Statistics on $\epsilon$ for~\ref{SDP2} with $\Tr(M)$ as the objective function.\label{Table: Trace Statistics}}
	{\begin{tabular}{|c|c|c|c|c|c|c|c|c|c|c|}\hline
			& $5 \times 5$ & $10 \times 5$ & $10 \times 10$ & $15 \times 10$ & $15 \times 15$ & $20 \times 15$ & $20 \times 20$ \\\hline
			Max &	0.1581 &	0.1589 &	0.115 &	0.1335 &	0.0878 &	0.082 &	0.0619	\\\hline
			Mean &	0.0219 &	0.0332 &	0.0405 &	0.04 &	0.0366 &	0.0356 &	0.0298 	\\\hline
			Median &	0.0046 &	0.0233 &	0.036 &	0.0346 &	0.0345 &	0.0325 &	0.0293 	\\\hline
	\end{tabular}}
	\centering
\end{table}

%
%
We also ran the second level of the hierarchy in \cite{laraki2012semidefinite} on the same 100 $5 \times 5$ games. The maximum $\epsilon$ observed was .3362, while the mean was .1880 and the median was .1800. The size of the variable matrix that needs to be positive semidefinite for this level is $78 \times 78$.


}

\section{Lemmas for Extreme Nash Equilibria}\label{Lemmas for Extreme Nash Equilibria}
The results reported in Section~\ref{Bounding Payoffs and Strategy Exclusion} were found using the \texttt{lrsnash}~\cite{avis2010enumeration} software which computes extreme Nash equilibria (see definition below). In particular the true maximum welfare and the persistent strategy sets were found in relation to extreme \jeffg{symmetric} Nash equilibria only. We show in this appendix why this is sufficient for the claims we made about \emph{all} \jeffg{symmetric} Nash equilibria. \jeffo{We prove a more general statement below about general games and general Nash equilibria since this could be of potential independent interest. The proof for symmetric games is identical once the strategies considered are restricted to be symmetric.}
\begin{defn}
	An \emph{extreme Nash equilibrium} is a Nash equilibrium which cannot be expressed as a convex combination of other Nash equilibria.
\end{defn}

\begin{lem}\label{lem: convex combination ENE}
	All Nash equilibria are convex combinations of extreme Nash equilibria.
\end{lem}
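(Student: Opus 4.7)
The plan is to pass to the convex hull of the Nash equilibrium set and apply the finite-dimensional Krein--Milman (Minkowski) theorem. Let $N \subseteq \triangle_m \times \triangle_n$ denote the set of all Nash equilibria of the given bimatrix game. First I would observe that $N$ is compact: it is the intersection of the compact set $\triangle_m \times \triangle_n$ with the closed set cut out by the (continuous) best-response inequalities from~(\ref{QCQP Formulation}). Consequently $\mathrm{conv}(N)$ is a compact convex subset of $\mathbb{R}^{m+n}$, and by Minkowski's theorem it coincides with the convex hull of its extreme points.

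Next I would establish two facts. (i) Every extreme point of $\mathrm{conv}(N)$ actually lies in $N$: by Carath\'eodory, any $v \in \mathrm{conv}(N)$ can be written as $v = \sum_{i} \lambda_i z^i$ with $z^i \in N$ and $\lambda_i > 0$; if $v$ is extreme in $\mathrm{conv}(N)$, each $z^i$ must equal $v$, so $v \in N$. (ii) Every extreme point of $\mathrm{conv}(N)$ is an extreme Nash equilibrium in the paper's sense: if $v = \lambda z^1 + (1-\lambda) z^2$ with $z^1, z^2 \in N$ and $\lambda \in (0,1)$, then $z^1, z^2 \in N \subseteq \mathrm{conv}(N)$, so extremality of $v$ in $\mathrm{conv}(N)$ forces $z^1 = z^2 = v$.

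Combining (i) and (ii) finishes the lemma: any $v \in N$ lies in $\mathrm{conv}(N)$ and is therefore a convex combination of extreme points of $\mathrm{conv}(N)$, and each such extreme point is an extreme Nash equilibrium. The conceptual subtlety I anticipate is that $N$ itself is not convex in general (it is typically a finite union of polytopes), so one cannot apply Minkowski directly to $N$. The remedy is precisely the round-trip $N \hookrightarrow \mathrm{conv}(N) \to \mathrm{ext}(\mathrm{conv}(N))$, with Carath\'eodory ensuring that the extreme points of $\mathrm{conv}(N)$ land back in $N$ and thus serve as the required ``extreme Nash equilibria.''
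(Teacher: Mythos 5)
Your proof is correct and follows essentially the same route as the paper's: both pass to $\mathrm{conv}(N)$, invoke Minkowski's theorem to write any point there as a convex combination of extreme points of the hull, and then argue that every extreme point of $\mathrm{conv}(N)$ is in fact an extreme Nash equilibrium. You are somewhat more explicit than the paper about the prerequisites (compactness of $N$ so that Minkowski applies, and Carath\'eodory to land extreme points of the hull back inside $N$), but the underlying idea is identical.
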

\begin{proof}
	It suffices to show that any extreme point of the convex hull of the set of Nash equilibria must be an extreme Nash equilibrium, as any point in a compact convex set can be written as a convex combination of its extreme points. Note that this convex hull contains three types of points: extreme Nash equilibria, Nash equilibria which are not extreme, and convex combinations of Nash equilibria which are not Nash equilibria. The claim then follows because any extreme point of the convex hull cannot be of the second or third type, as they can be written as convex combinations of other points in the hull.
\end{proof}

The next lemma shows that checking extreme Nash equilibria are sufficient for the maximum welfare problem.

\begin{lem}
	For any bimatrix game, there exists an extreme Nash equilibrium giving the maximum welfare among all Nash equilibria.
\end{lem}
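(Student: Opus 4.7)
The plan is to invoke the classical structure theorem for Nash equilibria of bimatrix games, which asserts that the set $N$ of Nash equilibria decomposes as a finite union $N = \bigcup_{k=1}^K P_k \times Q_k$ of \emph{maximal Nash subsets}, where each maximal Nash subset is a product of polytopes $P_k \subseteq \triangle_m$ and $Q_k \subseteq \triangle_n$. Since the maximum of a continuous function over a finite union equals the largest of the maxima over each piece, the proof reduces to showing that on each $P_k \times Q_k$ the welfare is maximized at a vertex of that polytope, and that every such vertex is an extreme Nash equilibrium in the sense of the preceding lemma.

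On a fixed maximal Nash subset $P_k \times Q_k$, the welfare $w(x,y) \defeq x^T(A+B)y$ is bilinear. I would use the standard fact that the maximum of a bilinear function over a product of compact convex sets is attained at a product of extreme points: given any optimal $(x^\star, y^\star) \in P_k \times Q_k$, the map $x \mapsto w(x, y^\star)$ is linear on $P_k$ and attains its maximum at a vertex $x_e$ of $P_k$; then $y \mapsto w(x_e, y)$ is linear on $Q_k$ and attains its maximum at a vertex $y_e$ of $Q_k$. This produces a vertex $(x_e, y_e)$ of $P_k \times Q_k$ at which the welfare is still optimal on that piece.

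The main obstacle is the remaining step: showing that every vertex of a maximal Nash subset is in fact an extreme Nash equilibrium. Suppose such a vertex $(x_e, y_e)$ could be written as $\lambda p_1 + (1-\lambda) p_2$ with $p_1, p_2 \in N$ and $\lambda \in (0,1)$. Because the simplex forces each coordinate of $p_1, p_2$ to vanish wherever the corresponding coordinate of $(x_e, y_e)$ vanishes, the supports of $p_1$ and $p_2$ are contained in those of $(x_e, y_e)$. I would then exploit the support-based linear characterization of the $P_k \times Q_k$ (they are cut out by specifying a pair of supports $(I, J)$ and imposing the associated equalities and inequalities among expected payoffs, which are linear in $(x, y)$ and hence preserved when passing from $(x_e, y_e)$ to its convex summands) to conclude that $p_1, p_2 \in P_k \times Q_k$, contradicting extremality of $(x_e, y_e)$ there. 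The technical heart of the argument is verifying that the specific linear conditions defining $P_k \times Q_k$ are indeed inherited by $p_1$ and $p_2$ from $(x_e, y_e)$; this is where a careful invocation (or re-derivation) of the classical structure theorem is essential.
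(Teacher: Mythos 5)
Your reduction via the structure theorem leaves a genuine gap exactly where you flag it, and the patch you sketch does not close it. Write the relevant product polytope (whether a maximal Nash subset or the support-based set) as $X_{I,J}\times Y_{I,J}$, where $X_{I,J}=\{x\in\triangle_m:\ x_i=0\ \forall i\notin I,\ x^TBe_j\ge x^TBe_l\ \forall j\in J,\ \forall l\}$ and symmetrically for $Y_{I,J}$. Suppose the welfare-optimal vertex $(x_e,y_e)$ of this product satisfies $(x_e,y_e)=\lambda p_1+(1-\lambda)p_2$ with $p_1=(x^1,y^1)$ and $p_2$ Nash equilibria. Support containment does give $x^1_i=0$ for $i\notin I$, but membership of $x^1$ in $X_{I,J}$ also requires that \emph{every} $j\in J=\mathrm{supp}(y_e)$ be a best response to $x^1$; the fact that $(x^1,y^1)$ is an equilibrium only guarantees this for $j\in\mathrm{supp}(y^1)$, which may be a strict subset of $J$. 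So the linear conditions cutting out $P_k\times Q_k$ are \emph{not} inherited by the convex summands, and the contradiction with extremality of $(x_e,y_e)$ inside $P_k\times Q_k$ does not follow. The claim you need is true, but the standard route to it goes through the global best-response polyhedra: one shows that vertices of maximal Nash subsets are vertex pairs of those polyhedra, and a vertex of a product of polyhedra cannot be a nontrivial convex combination of other points of that product. That is essentially the full content of the structure theorem rather than a corollary of support containment, so as written the proposal is incomplete.

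The paper avoids all of this machinery with a short direct argument: take a welfare-maximizing equilibrium $(\tilde x,\tilde y)$ (the set of equilibria is compact and welfare is continuous), write it as $\sum_i\lambda_i(x^i,y^i)$ with each $(x^i,y^i)$ an extreme equilibrium (the preceding lemma), and use the two inequalities $x^{iT}Ay^{j}\le x^{jT}Ay^{j}$ and $x^{iT}By^{j}\le x^{iT}By^{i}$ --- valid for any two equilibria, directly from the best-response property --- to bound the bilinear welfare of the combination by $\sum_i\lambda_i\, x^{iT}(A+B)y^i$. Hence some extreme equilibrium already attains the maximum. Note that your approach cannot sidestep this kind of inequality by appealing to the preceding lemma either, since welfare is bilinear rather than linear in $(x,y)$; to salvage your route you must either import the vertex-pair characterization from the structure theory or insert an inequality of the above type at the last step.
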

\begin{proof}
	
	Consider any Nash equilibrium $(\tilde{x}, \tilde{y})$, and let it be written as $\bmat \tilde{x} \\ \tilde{y} \emat = \sum_{i=1}^r \lambda_i \bmat x^i \\ y^i \emat$ for some set of extreme Nash equilibria $\bmat x^1 \\ y^1 \emat, \ldots, \bmat x^r \\ y^r \emat$ and $\lambda \in \triangle_r$. Observe that for any $i,j$, \begin{equation}\label{eq: defn of NE}x^{iT}Ay^{j} \le x^{jT}Ay^{j}, x^{iT}By^{j} \le x^{iT}By^{i},\end{equation} from the definition of a Nash equilibrium. Now note that
	
	\begin{equation*}
	\begin{aligned}
	\tilde{x}^T(A+B)\tilde{y} & = (\sum_{i=1}^r \lambda_i x^i)^T(A+B)(\sum_{i=1}^r \lambda_i y^i)\\
	&=\sum_{i=1}^r \sum_{j=1}^r \lambda_i\lambda_j x^{iT}(A+B)y^j\\
	&= \sum_{i=1}^r \sum_{j=1}^r \lambda_i\lambda_j x^{iT}Ay^j + \sum_{i=1}^r \sum_{j=1}^r \lambda_i\lambda_j x^{iT}By^j\\
	& \overset{(\ref{eq: defn of NE})}{\le} \sum_{i=1}^r \sum_{j=1}^r \lambda_i\lambda_j x^{jT}Ay^j + \sum_{i=1}^r \sum_{j=1}^r \lambda_i\lambda_j x^{iT}By^i\\
	&= \sum_{i=1}^r \lambda_i x^{iT}Ay^i + \sum_{i=1}^r \lambda_i x^{iT}By^i\\
	&= \sum_{i=1}^r \lambda_i x^{iT}(A+B)y^i.
	\end{aligned}
	\end{equation*}
	
	In particular, since each $(x^i, y^i)$ is an extreme Nash equilibrium, this tells us for any Nash equilibrium $(\tilde{x}, \tilde{y})$ there must be an extreme Nash equilibrium which has at least as much welfare.
\end{proof}

Similarly for the results for persistent sets in Section~\ref{Strategy Exclusion}, there is no loss in restricting attention to extreme Nash equilibria.
\begin{lem}
	For a given strategy set $\mathcal{S}$, if every extreme Nash equilibrium plays at least one strategy in $\mathcal{S}$ with positive probability, then every Nash equilibrium plays at least one strategy in $\mathcal{S}$ with positive probability.
\end{lem}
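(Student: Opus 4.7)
My plan is to prove the contrapositive: assume there exists some Nash equilibrium $(\tilde{x},\tilde{y})$ that plays no strategy in $\mathcal{S} = (\mathcal{S}_x,\mathcal{S}_y)$ with positive probability, and then exhibit an extreme Nash equilibrium with the same property. The key tool is the preceding Lemma~\ref{lem: convex combination ENE}, which lets me write any Nash equilibrium as a convex combination of extreme Nash equilibria.

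Concretely, I would start by writing $\bmat \tilde{x}\\\tilde{y}\emat = \sum_{i=1}^r \lambda_i \bmat x^i\\y^i\emat$ with $\lambda \in \triangle_r$ (all $\lambda_i > 0$ without loss of generality, since we can drop zero-weight terms) and each $(x^i,y^i)$ an extreme Nash equilibrium. The assumption on $(\tilde{x},\tilde{y})$ gives $\tilde{x}_s = 0$ for all $s \in \mathcal{S}_x$ and $\tilde{y}_t = 0$ for all $t \in \mathcal{S}_y$. Since each $x^i$ and $y^i$ is a nonnegative vector and the $\lambda_i$ are strictly positive, the equations $\sum_i \lambda_i x^i_s = 0$ and $\sum_i \lambda_i y^i_t = 0$ force $x^i_s = 0$ and $y^i_t = 0$ componentwise for every $i$ and for every $s \in \mathcal{S}_x$, $t \in \mathcal{S}_y$. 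This directly contradicts the hypothesis that every extreme Nash equilibrium plays at least one strategy in $\mathcal{S}$ with positive probability.

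There is no real obstacle here; the only subtlety is being careful that the nonnegativity of strategy vectors combined with strict positivity of the mixing weights allows us to pass the zero-probability property from the mixture to each extreme component. This is immediate from the fact that a convex combination (with all positive weights) of nonnegative numbers is zero if and only if each summand is zero. The proof will therefore be short, roughly three or four lines of LaTeX, and will simply invoke Lemma~\ref{lem: convex combination ENE} together with this elementary observation.
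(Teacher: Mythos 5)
Your proposal is correct and is essentially the paper's own argument, just phrased in contrapositive form: both rely on Lemma~\ref{lem: convex combination ENE} together with the observation that a convex combination of nonnegative strategy vectors vanishes on a coordinate if and only if every positively weighted component does.
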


\begin{proof}
	Let $\mathcal{S}$ be a persistent set of strategies. Since all Nash equilibria are composed of nonnegative entries, and every extreme Nash equilibrium has positive probability on some entry in $\mathcal{S}$, any convex combination of extreme Nash equilibria must have positive probability on some entry in $\mathcal{S}$.
	
\end{proof}

\jeffo{
\section{A Note on Reductions from General Games to Symmetric Games}\label{Sec: Irreducibility}

An anonymous referee asked us if our guarantees for symmetric games transfer over to general games by symmetrization. Indeed, there are reductions in the literature that take a general game, construct a symmetric game from it, and relate the Nash equilibria of the original game to symmetric Nash equilibria of its symmetrized version. In this Appendix, we review two well-known reductions of this type~\cite{griesmer1963symmetric,jurg1992symmetrization} and show that the quality of approximate Nash equilibria can differ greatly between the two games. We hope that our examples can be of independent interest.


\subsection{The Reduction of Griesmer, Hoffman, and Robinson \cite{griesmer1963symmetric}}

Consider a game $(A,B)$ with $A, B > 0$ and a Nash equilibrium $(x^*, y^*)$ of it with payoffs $p_A~\defeq~x^{*T}Ay^*$ and $p_B~\defeq~x^{*T}By^*$. Then the symmetric game $(S_{AB}, S_{AB}^T)$ with

$$S_{AB} \defeq \bmat
0 & A\\
B^T & 0
\emat$$
admits a symmetric Nash equilibrium in which both players play $\bmat \frac{p_A}{p_A+p_B}x^* \\ \frac{p_B}{p_A+p_B}y^*\emat$. In the reverse direction, any symmetric equilibrium $\left( \bmat x\\ y\emat, \bmat x\\ y\emat\right)$ of $(S_{AB}, S_{AB}^T)$ yields a Nash equilibrium $(\frac{x}{1_m^Tx}, \frac{y}{1_n^Ty})$ to the original game $(A,B)$.

To demonstrate that high-quality approximate Nash equilibria in the symmetrized game can map to low-quality approximate Nash equilibria in the original game, consider the game given by $(A,B) = \left( \bmat \epsilon & 0 \\ 1 & 1\emat, \bmat \epsilon^2 & 0 \\ 0 & 1 \emat \right)$ for some $\epsilon > 0$. The symmetric strategy $$\left( \bmat \frac{1}{1 + \epsilon} \\ 0 \\ \frac{\epsilon}{1+\epsilon} \\ 0 \emat, \bmat \frac{1}{1 + \epsilon} \\ 0 \\ \frac{\epsilon}{1+\epsilon} \\ 0 \emat \right)$$ is an $\epsilon \frac{1-\epsilon}{1+\epsilon}$-NE for $(S_{AB}, S_{AB}^T)$, but the strategy pair $\left(\bmat 1 \\ 0 \emat, \bmat 1 \\ 0 \emat\right)$ is a $(1-\epsilon)$-NE for $(A,B)$.

\subsection{The Reduction of Jurg, Jansen, Potters, and Tijs \cite{jurg1992symmetrization}}
Consider a game $(A,B)$ with $A > 0, B < 0$ and a Nash equilibrium $(x^*, y^*)$ of it with payoffs $p_A~\defeq~x^{*T}Ay^*$ and $p_B~\defeq~x^{*T}By^*$. Then the symmetric game $(S_{AB}, S_{AB}^T)$ with

$$S_{AB} \defeq \bmat
0_{m \times m} & A & -1_m\\
B^T & 0_{n \times n} & 1_n\\
1_m^T & -1_n^T & 0\\
\emat$$
admits a symmetric Nash equilibrium in which both players play $$\bmat \frac{x^*}{2 - p_B} \\ \frac{y^*}{2+p_A} \\ 1 - \frac{1}{2-p_B} - \frac{1}{2+p_A}\emat.$$ In the reverse direction, any symmetric equilibrium $\left( \bmat x\\ y\\z \emat, \bmat x\\ y\\z\emat\right)$ of $(S_{AB}, S_{AB}^T)$ yields a Nash equilibrium $(\frac{x}{1_m^Tx}, \frac{y}{1_n^Ty})$ to the original game $(A,B)$. This reduction has some advantages over the previous one (see \cite[Section 1]{jurg1992symmetrization}).

To demonstrate that high-quality approximate Nash equilibria in the new symmetrized game can again map to low-quality approximate Nash equilibria in the original game, consider the game given by $(A,B) = \left( \bmat 0 & 0 \\ 0 & 1\emat, \bmat -1 & -1 \\ 0 & 0 \emat \right)$. Let $\epsilon \in (0,\frac{1}{2})$. The symmetric strategy $$\left( \bmat \epsilon \\ 0 \\ 1 - \epsilon \\ 0 \\ 0\emat, \bmat \epsilon \\ 0 \\ 1 - \epsilon \\ 0 \\ 0\emat \right)$$ is an $\frac{\epsilon}{2}(1-\epsilon)$-NE\footnote{Note that approximation factor is halved since the range of the entries of the payoff matrix in the symmetrized game is $[-1,1]$.} for $(S_{AB}, S_{AB}^T)$, but the strategy pair $\left(\bmat 1 \\ 0 \emat, \bmat 1 \\ 0 \emat\right)$ is a $1$-NE for $(A,B)$.}



\end{document}